\documentclass[a4paper,12pt]{article}

\usepackage[english]{babel}
\usepackage[margin=1in]{geometry} 

\title{Spectral and computational aspects of a regularized fractional Laplacian for non-local diffusion on graphs}
\author{Alessandro Filippo\thanks{University of Rome Tor Vergata, Department of Mathematics, Italy (\texttt{filippo@mat.uniroma2.it}, \texttt{mariarosa.mazza@mat.uniroma2.it})}
\and Mariarosa Mazza\footnotemark[1]
}

\usepackage{amsmath}
\usepackage{amssymb}
\usepackage{amsthm}
\usepackage{amsfonts}  
\usepackage{bm}
\usepackage[thinc]{esdiff}
\newtheorem{theorem}{Theorem}
\newtheorem{lemma}{Lemma}
\newtheorem{definition}{Definition}
\newtheorem{remark}{Remark}
\newtheorem{example}{Example}
\newtheorem{proposition}{Proposition}

\usepackage{leftidx}

\usepackage{graphicx}

\usepackage[normalem]{ulem}

\usepackage{tkz-graph}

\usepackage{subcaption}

\usepackage{enumitem}

\usepackage{booktabs}
\usepackage{latexsym}
\usepackage{multirow}

\usepackage{hyperref}

\usepackage[dvipsnames]{xcolor}

\usepackage{amsopn}
\DeclareMathOperator{\diam}{diam}
\DeclareMathOperator{\diag}{diag}
\DeclareMathOperator{\dist}{dist}
\DeclareMathOperator{\restr}{restr}
\DeclareMathOperator*{\argmin}{\arg\!\min}

\newcommand{\R}{\mathbb{R}}
\newcommand{\cE}{\mathcal{E}}
\newcommand{\cV}{\mathcal{V}}
\newcommand{\cS}{\mathcal{S}}

\newcommand{\onevec}{\mathbf{1}}
\newcommand{\xvec}{\mathbf{x}}
\newcommand{\yvec}{\mathbf{y}}

\newcommand{\zerovec}{\mathbf{0}}
\newcommand{\consensus}{\overline{\mathbf{x}}_{\mathbf{0}}}
\newcommand{\fL}{\Delta^\alpha}
\newcommand{\rfL}{{}_r\fL}
\newcommand{\srfL}{{}_r^\beta\fL}

\newcommand{\fW}{W^\alpha}
\newcommand{\rfW}{{}_r\fW}
\newcommand{\srfW}{{}_r^\beta\fW}
\newcommand{\fG}{G^\alpha}
\newcommand{\rfG}{\leftidx{_r}\fG}
\newcommand{\srfG}{\leftidx{_r^\beta}\fG}

\begin{document}

\maketitle

\abstract{ 
The fractional Laplacian has been widely used to model non-local diffusion on graphs, allowing interactions that extend beyond immediate neighbors. However, it suffers from a structural inconsistency as it breaks compatibility with the topology of the original network. 
To address this issue, a combination of the standard and fractional Laplacians aimed at restoring compatibility while retaining the spectral richness of the fractional operator was recently proposed. 

In this work, we provide a thorough analysis of the diffusion properties of the resulting regularized operator. We prove that it yields superdiffusive behavior independently of whether the underlying graph is weighted or unweighted—a property not generally satisfied by other non-local Laplacian variants. An efficient construction of the regularized operator which preserves the same asymptotic computational cost of the fractional Laplacian is also given.
Our numerical experiments demonstrate the effectiveness and computational practicality of the regularized operator for modeling non-local diffusion on real-life networks.
}

\maketitle
	
\section{Introduction}

Modeling diffusion processes on networks has been a central topic of research, with applications including the spread of infectious diseases, opinion dynamics on social media, signal propagation in neural and brain networks, and the analysis of transport and communication infrastructures. Graph-based models are particularly well-suited for capturing the complexity of interactions in real-world networks, where the underlying topology strongly influences the system dynamics. A central tool in this context is the graph Laplacian, a local operator that encodes nearest-neighbor interactions and gives rise to the consensus dynamics described by the heat equation on graphs.

However, many natural and engineered systems exhibit dynamics that are not purely local: information, mass, or influence can jump across long distances, bypassing intermediate connections. These \emph{non-local} behaviors cannot be adequately captured by standard Laplacian-based diffusion models. To address this issue, two distinct approaches have emerged. 

The first approach builds upon the notion of the \emph{fractional graph Laplacian} defined algebraically via spectral calculus as a fractional power of the standard graph Laplacian, and it founds applications for instance in image reconstruction \cite{Buccini_fractional_laplacian_image_reconstruction} or graph neural networks \cite{maskey2023fractional}. It was first studied in \cite{riascos2012levy,Riascos_introduction_of_fractional_laplacian} where the authors introduced the notion of fractional random walks on undirected networks using the operator $\Delta^\alpha$, with $\Delta$ the graph Laplacian and $\alpha \in (0,1)$, and showed that this leads to Lévy-type superdiffusion; see also the recent survey on non-local dynamics on undirected weighted networks \cite{Riascos_survey_nonlocal_dynamics} for additional details on this class of models. Further developments were carried out in \cite{Benzi_2020_fractional_laplacian}, where the authors extended the notion of non-local dynamics to
directed networks and in \cite{Bertaccini_Dustantante_nonlocal_varaible_index} where a variable diffusion order was considered. Efficient numerical algorithms for the computation of the solution to the fractional diffusion equation on directed networks using rational Krylov methods were later proposed in \cite{benzi2022rational}.

In parallel, Estrada \cite{estrada2012path_laplacian} proposed a combinatorial approach to capturing non-local interactions on undirected and unweighted networks through the notion of \emph{path Laplacian}. This operators extend the standard Laplacian by incorporating the influence of shortest paths between nodes, rather than just direct connections. For an alternative approach based on walks see \cite{ArrDur2026_walkbasedlaplaciansmodeling}.

Despite their differing foundations, both non-local diffusion operators, fractional and path-based Laplacians, share a key feature: they replace the underlying graph with a complete weighted graph, with edge-weights that typically decay as a power-law of the combinatorial graph distance separating the two nodes. This allows information/mass to ``jump'' from a node to any other node in the network, even if they were not originally connected by an edge. As a consequence of this, one would expect that a non-local diffusion process on a network based on the fractional Laplacian or the path Laplacian operators always converges faster than the standard diffusion. 
However, as already noted by Estrada in \cite{Estrada_2021_fractional_vs_path_laplacians}, this holds true for the path Laplacian in presence of unweighted graphs, while it ultimately depends on the graph, in the case of the fractional Laplacian, as it could happen that the complete fractional graph induced by $\fL$ has a lower algebraic connectivity than the original graph  (see Section \ref{subsec:model_diffusion} for more details).

On the other hand, replacing the original graph with a fully connected one where weights decay with distance may introduce a structural inconsistency: the resulting dynamics could be 
no longer strictly compatible with the topology of the original graph. This incompatibility is highlighted and addressed in \cite{Bianchi2022}, where the authors proposes a \emph{regularized fractional graph Laplacian} constructed as a combination of the standard and fractional Laplacians that retains the spectral richness of fractional operators while enforcing compatibility with the underlying graph structure.

However, while numerical experiments demonstrated that the regularized operator can produce genuine non-local diffusion, the question of whether it systematically yields faster diffusion than the standard Laplacian remains open. Moreover, no practical method for computing the regularized Laplacian efficiently was available, leaving its use limited in large-scale settings.

In this work, we further this line of investigation by proving that the algebraic connectivity of the regularized fractional graph is always greater than that of the standard graph, and in some cases even greater than that of the fractional graph. This, in turn, will guarantee that the regularized fractional Laplacian ensures a superdiffusive behavior compared to the standard Laplacian, for any choice of the fractional parameter, and independently of whether the graph is weighted or unweighted —a property that does not hold in general for either the fractional or the path Laplacians. 
Moreover, we introduce a Boolean--Hadamard formulation for constructing the regularized Laplacian, which allows its computation at the same cost as that of the fractional Laplacian alone. 

The remaining is organized as follows. Section~\ref{sec:basics} contains some basic concepts of graph theory and linear algebra that are of interest to this work. Section~\ref{sec:fractional_laplacian} introduces the fractional graph Laplacian and its use to model non-local diffusion on graphs. Section~\ref{sec:reg_fractional_laplacian} investigates the spectral behavior of the regularized Laplacian and introduces a new Boolean–-Hadamard formulation for its computation. Section~\ref{sec:numerical_examples} presents a series of numerical experiments on both synthetic and real-world graphs comparing spectral and dynamical properties of standard, fractional, path-based,
and regularized Laplacians, illustrating the benefits and limitations of each model in practical scenarios. Finally, in Section~\ref{sec:conclusions} we draw conclusions.
\newline

\noindent\textbf{Notation.} In the following, $ \circ $ indicates the \emph{Hadamard} (or \emph{entrywise}) \emph{product} and $\mathbf{1}=[1,\dots,1]^T$ denotes the vector of all ones. Moreover, for each matrix $M\in\R^{n \times n}$ and each vector $\mathbf{m}\in \R^{n}$, we will denote by 
\begin{itemize}
    \item $\diag(M)$ the diagonal matrix such that $(\diag(M))_{ii}=(M)_{ii}$;
    \item $\diag(\mathbf{m})$ the diagonal matrix such that $(\diag(\mathbf{m}))_{ii}=m_{i}$. 
\end{itemize}
Notice that with the above notation we have that $M = \diag(M) = \diag(M\onevec)$ if and only if $M$ is a diagonal matrix. 

\section{Background} \label{sec:basics}

In this section, we recall some basic notions of graph theory (Section \ref{subsec:graphs}) and the algebra of Boolean matrices (Section \ref{subsec:lin_algebra}) that will be useful for this work. Moreover, we recall the definition of graph Laplacian as well as its role in modeling diffusion processes on graphs (Section \ref{subsec:graph_laplacian}). Additional material can be found in \cite{harary1969graph,kim_boolean_1982,Bullo_lectures}.

\subsection{Graphs} \label{subsec:graphs}

We define a \emph{graph} as a pair $G = (\cV, w)$ where $\cV=\{1,\dots,n\}$ is a set of \emph{nodes}, and $w:\cV \times \cV \longrightarrow \R_{\geq 0}$ is a function that assigns a non-negative weight $w_{ij}$ to each pair of nodes $(i, j)$. A pair $(i, j)$ such that $w_{ij} > 0$ is called \emph{edge} from $i$ to $j$, and we denote by $\cE$ the collection of all edges in $G$.

We say that a graph $G$ is \emph{undirected} if $w_{ij}=w_{ji}$ for all $i,j=1\dots,n$, otherwise we say that $G$ is \emph{directed}. If all weights are in $\{0,1\}$ we say that the graph is \emph{unweighted}, otherwise the graph is said to be \emph{weighted}. A graph is said to be \emph{loopless} if it does not contain any edge from a node to itself. A graph is said to be \emph{complete} if there is an edge connecting each pair of nodes $i\neq j$. 
 We say that a graph $G$ is \emph{sparse} if it has at most $O(n)$ edges, where $n$ is the number of nodes, and we say that a graph is \emph{very dense} if it has almost all possible edges; in other words, a very dense graph is an almost complete graph.

A \emph{walk} of length $\ell$ in $G$ is a sequence of $\ell +1 $ nodes $i_0,i_1,\dots,i_\ell$ such that $(i_k,i_{k+1})\in\cE$ for each $k=0,\dots,\ell-1$.  A walk without repeated nodes is called a \emph{path}, and the \emph{combinatorial distance} $\dist(i,j)$ between two nodes $i$ and $j$ is defined as the length of the \emph{shortest path} that connects them. The \emph{diameter} of a graph is defined as $\diam=\max_{ij}\{ \dist(i,j)\}.$
We say that an undirected graph $G$ is \emph{connected} if there exists a walk connecting each pair of nodes. A \emph{connected component} is a connected subgraph of $G$ that is maximal with respect to inclusion.

For each graph $G$ we introduce the \emph{weight matrix} $W\in \R^{n \times n}$ as the matrix $(W)_{ij} = w_{ij}$, and the \emph{adjacency matrix} $A$ as the $n \times n$ matrix with entries
\begin{equation*}
(A)_{ij} = a_{ij}= 
    \begin{cases}
        1 & \text{ if } (i,j)\in \cE, \\
        0 & \text{ otherwise. }
    \end{cases}
\end{equation*}
Notice that $W$ and $A$ are symmetric if and only if $G$ is undirected and that $W = A$ if and only if $G$ is unweighted. Moreover, if $G$ is loopless, then $W$ and $A$ are \emph{hollow}, i.e., $a_{ii}=w_{ii}=0$ for all $i=1,\dots,n$. Lastly, $G$ is sparse if and only if $W$ and $A$ are sparse, i.e., they have at most $O(n)$ nonzero entries.

Henceforth, unless specified otherwise, we will assume that each graph is undirected, connected and loopless, and then we use the word ``graph'' to indicate a graph with these properties. 

\subsection{Boolean matrices} \label{subsec:lin_algebra}

A \emph{Boolean} or \emph{binary matrix} is a matrix with entries in a Boolean algebra \cite[Section 1.2]{kim_boolean_1982}. For this work, we will consider the set of Boolean matrices as the set of matrices with entries in $\{0,1\}$ equipped with the standard matrix operations plus the definition of \emph{complementary matrix}.
\begin{definition}
Given a Boolean matrix $B\in \{0,1\}^{n \times n}$, its complementary matrix $B^{C}$ is such that $(B)_{ij}=1$ if and only if $(B^C)_{ij}=0$.  
\end{definition}

Three special Boolean matrices are worth mentioning: the \emph{universal matrix} $\mathbf{1}\mathbf{1}^T$, in which each entry is $1$, the \emph{identity matrix} $I$, and the \emph{zero matrix} $O$. Notice that all these matrices have some significant properties regarding the Hadamard product and the complement of a Boolean matrix: 
\begin{itemize}
    \item for all $B\in \{0,1\}^{n \times n}$, $B + B^C = \mathbf{1}\mathbf{1}^T$;
    \item for all $M \in \R^{n \times n}$, $M \circ \mathbf{1}\mathbf{1}^T = M$, $M\circ I = \diag(M)$, and $M\circ O = O$.
\end{itemize}

In our main results, we will make use of the following observations that highlight some connections between graph theory and the algebra of Boolean matrices. 

\begin{remark}
    If $A$ is the adjacency matrix of a graph $G$ with $n$ nodes, then $A$ is a  $n\times n$ Boolean matrix. Moreover, $A$ represents the collection of its edges $\cE$ as a binary relation over the set $\{1,\dots,n\}$ \cite[Section 1.2]{kim_boolean_1982}. Also, if $W$ is the weight matrix of $G$, then $W\circ A = W$.
\end{remark}

\begin{remark} \label{rmk:dense_graphs}
Let $A$ be the adjacency matrix of a graph $G$, then $G$ is sparse if and only if $A^C$ has almost $n^2$ nonzero entries. 
   
\end{remark}

\subsection{Graph Laplacian and local diffusion on graphs} \label{subsec:graph_laplacian}
Given a graph with weight matrix $W$, we define the \emph{graph Laplacian} as
\begin{equation*} 
    \Delta = D - W,
\end{equation*}
where $D$ is the diagonal \emph{degree matrix} such that $D=\diag(W\onevec)$. Entrywise, we have that
\begin{equation*} 
(\Delta)_{ij}=
    \begin{cases}
        \sum_{k=1}^n w_{ik} & \text{ if $i= j$} \\
    -w_{ij}    & \text{ if $i\neq j$}. 
    \end{cases}
\end{equation*}

\begin{remark}
    If $\Delta$ is the Laplacian matrix of a graph with weight matrix $W$, then $\Delta\circ B = - W\circ B$ for each Boolean and hollow matrix $B$. 
\end{remark}

Notice that $\Delta\mathbf{1}=\mathbf{0}$ for any graph $G$, and that $\Delta$ is symmetric and semipositive definite if and only if $G$ is undirected. 
Moreover, it is well known that the multiplicity of $0$ as an eigenvalue of $\Delta$ is equal to the number of connected components of $G$. In particular, $0$ is a simple eigenvalue of $\Delta$ if and only if $G$ is connected. The smallest positive eigenvalue of $\Delta$ is often called \emph{algebraic connectivity} of the graph, and its associated eigenvector is also referred to as \emph{Fiedler eigenvector} \cite{Fiedler_algebraic_connectivity}. We recall the following well-known upper bound for the algebraic connectivity of a (possibly weighted) graph. 

\begin{lemma}\label{lemma:bound_algebraic_connectivity}
Let $G$ be a connected graph with $n> 2$ nodes, and let $\lambda_{n-1}(\Delta)$ be the smallest positive eigenvalue of $\Delta$. Then if $\min_{i} (D)_{ii} \leq d$, where
\begin{equation*}
d =
    \begin{cases}
        1 & \text{ if $G$ is unweighted}, \\
        (n-1)/n &\text{ otherwise},
    \end{cases}
\end{equation*}
we have that 
\begin{equation*}
\lambda_{n-1}(\Delta) \leq 1.
\end{equation*}

\begin{proof}
If $G$ is unweighted, this result is \cite[Theorem 5.9]{Das_algebraic_connectivity}. If $G$ is weighted, this result is a consequence of \cite[Proposition 3.5]{Fiedler_algebraic_connectivity}, which shows that $\lambda_{n-1}(\Delta)\leq [n/(n-1)]\min_{i}(D)_{ii}$.  
\end{proof}
\end{lemma}
 Notice that if $G$ is unweighted, Lemma \ref{lemma:bound_algebraic_connectivity} ensures that it is sufficient to check whether $G$ has a \emph{leaf node}, which are nodes that have just one connection, to conclude that $\lambda_{n-1}(\Delta) \leq 1$. Moreover, if $G$ is weighted,  Lemma \ref{lemma:bound_algebraic_connectivity} indicates that it is always possible to rescale the weights to guarantee that $\lambda_{n-1}(\Delta) \leq 1$. \newline

 The graph Laplacian appears in numerous applications and has several useful properties. Notably, it describes how each node can diffuse mass or information to its neighbors, modeling flow or spread. For this reason, the graph Laplacian is widely used to model diffusion processes on networks \cite{Bullo_lectures}.
 For example, let us consider the heat equation with initial distribution $\mathbf{x}_0\in\R^{n}$ on a connected graph $G$:
\begin{equation} \label{eq:simple_heat_equation}
\begin{cases}
    \diff{\mathbf{x}(t)}{t} +\Delta \mathbf{x}(t) =0, & t>0, \\
    \mathbf{x}(0) = \mathbf{x}_0.
\end{cases}
\end{equation}
Then, the solution of \eqref{eq:simple_heat_equation} for all $t\geq0$ can be written as
\begin{equation*}
    \mathbf{x}(t) = e^{-t\Delta} \mathbf{x}_0 = \sum_{k=1}^n e^{-\lambda_k(\Delta)t}(\mathbf{q}_k^T\mathbf{x}_0)\mathbf{q}_k,
\end{equation*} 
where $\lambda_1(\Delta)\geq \dots \geq \lambda_{n-1}(\Delta)> \lambda_n(\Delta)=0$ are the real eigenvalues of $\Delta$ and $\mathbf{q}_1,\ldots,\mathbf{q}_n$ are the corresponding orthogonal and normalized 
eigenvectors. Recalling that $\mathbf{q}_n=(1/\sqrt{n})\mathbf{1}$ and letting $t\rightarrow+\infty$, we have that \begin{equation} \label{eq:convergence_heat_equation}
    \mathbf{x}(t) = \frac{1}{n}(\mathbf{1}^T\mathbf{x}_0)\mathbf{1} + \sum_{k=1}^{n-1} e^{-\lambda_k(\Delta)t}(\mathbf{q}_k^T\mathbf{x}_0)\mathbf{q}_k \longrightarrow \frac{1}{n}(\mathbf{1}^T\mathbf{x}_0)\mathbf{1}=:\overline{\xvec}_0,
\end{equation}
which is exactly the \emph{consensus equilibrium}, that is, the average of the initial distribution $\mathbf{x}_0$. Moreover, it is easy to see that for any connected graph $G$ and any $\mathbf{x}_0\in\R^{n}$, the speed of convergence in \eqref{eq:convergence_heat_equation} is governed by the term $\exp(-\lambda_{n-1}(\Delta)t)$. This means that the convergence of $\xvec(t)$ to $\overline{\xvec}_0$ gets faster as the smallest positive eigenvalue of $\Delta$, i.e., the algebraic connectivity of $G$, increases~\cite{Bullo_lectures}.

\section{The fractional graph Laplacian}\label{sec:fractional_laplacian}

In this section, we recall the definition of the fractional graph Laplacian (Section~\ref{subsec:fractional_graph}) and some of its properties (Section \ref{subsec:properties_fL}). Moreover, we discuss its application in modeling non-local diffusion on graphs (Section \ref{subsec:model_diffusion}).

\subsection{The fractional graph}\label{subsec:fractional_graph}
Let $G$ be a graph ad let $\Delta$ be its Laplacian matrix. Since $\Delta$ is symmetric and semipositive definite, we can write its spectral decomposition $\Delta=Q \Lambda Q^T$, where $Q=[\mathbf{q}_1,\dots,\mathbf{q}_n]\in\R^{n\times n}$ is the unitary matrix of eigenvectors and $\Lambda$ is a diagonal matrix such that $(\Lambda)_{ii}=\lambda_i(\Delta)$ are the non-negative eigenvalues of $\Delta$.
Now, let $\alpha$ be a parameter in $(0,1)$, we define the \emph{fractional graph Laplacian} of $G$ as the matrix
\begin{equation*} 
    \fL := Q \Lambda^{\alpha} Q^T, 
\end{equation*}
where $(\Lambda^{\alpha})_{ii}=(\lambda_i(\fL))=(\lambda_i(\Delta))^\alpha$ for each $i=1,\dots,n$. Observe that, for any given parameter $\alpha\in(0,1)$, $\fL\onevec = \zerovec$ and $(\fL)_{ij} <0$ for each $i\neq j$~\cite{Riascos_introduction_of_fractional_laplacian}. As a consequence, there exists a unique loopless graph such that its associated Laplacian matrix is $\fL$. Such graph, hereafter denoted as \emph{fractional graph}, is defined as $G^\alpha=(\cV,w^{\alpha})$, where
\begin{equation*}
 w^{\alpha}_{ij}=
 \begin{cases}
     -(\fL)_{ij} & \text{ if } i\neq j,\\
     0 & \text{ otherwise,}
 \end{cases}
\end{equation*}
is the edge-weight function.
 
\subsection{Some properties of the fractional graph Laplacian}\label{subsec:properties_fL}

Let us denote by $\fW$ the weight matrix of the fractional graph $\fG$. Since $(\fW)_{ij}=w^{\alpha}_{ij}>0$ for each $i\neq j$, we have that $\fG$ is always a complete weighted graph. Furthermore, it was observed in \cite{Benzi_2020_fractional_laplacian,Bianchi2022} and, most recently, in \cite{Schweitzer_2022_decay_bounds_fL} that the weight of the edge connecting two nodes $i\neq j$ exhibits a power-law decay with respect to the inverse of their combinatorial distance $\dist(i,j)$. In particular, we recall the following asymptotic estimated bound.

\begin{proposition}[\cite{Schweitzer_2022_decay_bounds_fL}] \label{prop:bound_decay_shweitzer}
Let $G$ be a graph and let $\alpha\in(0,1)$. Then, for each $i,j$ in $G$ such that $\dist(i,j)>1$, 
\begin{equation*}
       0< w_{ij}^\alpha\lesssim C \cdot \dist(i,j)^{-2\alpha},
\end{equation*}
where $C$ is a constant independent of $i$,$j$ and $\alpha$.
\end{proposition}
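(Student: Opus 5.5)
We plan to prove the bound through the standard route for decay of matrix functions of a sparse matrix: approximate $t\mapsto t^\alpha$ on the spectrum of $\Delta$ by polynomials and exploit the banded (graph-local) structure of powers of $\Delta$. We start from the observation that, since $\Delta = D - W$ has the sparsity pattern of $A$ plus the diagonal, for every polynomial $p$ of degree $k$ we have $(p(\Delta))_{ij}=0$ whenever $\dist(i,j)>k$. Hence, for $i\neq j$ with $\dist(i,j)=m>1$,
\begin{equation*}
w^\alpha_{ij} = |(\fL)_{ij}| = |(\fL - p(\Delta))_{ij}| \leq \|\fL - p(\Delta)\|_2 = \max_{k}|\lambda_k(\Delta)^\alpha - p(\lambda_k(\Delta))| \leq E_{m-1}
\end{equation*}
for every $p$ of degree at most $m-1$, where $E_{m-1}$ denotes the best uniform approximation error of $t^\alpha$ on $[0,\lambda_1(\Delta)]$ by polynomials of degree $m-1$. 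Positivity $w^\alpha_{ij}>0$ is already recorded in Section~\ref{subsec:fractional_graph} (from \cite{Riascos_introduction_of_fractional_laplacian}), so only the upper bound needs work.

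The key step is then the classical Bernstein-type estimate for best polynomial approximation of $|x|^{2\alpha}$, or equivalently $t^\alpha$ near the endpoint $0$. After rescaling $t=\lambda_1(\Delta)\,\frac{1+x}{2}$, or more cleanly substituting $t=\lambda_1(\Delta)\,s^2$ with $s\in[-1,1]$, the function $t^\alpha$ becomes $\lambda_1(\Delta)^\alpha |s|^{2\alpha}$. An even polynomial in $s$ of degree $2(m-1)$ is a polynomial in $t$ of degree $m-1$, and Bernstein's theorem gives $E_{2(m-1)}(|s|^{2\alpha})\sim \beta(2\alpha)\,(2(m-1))^{-2\alpha}$. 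This yields $w^\alpha_{ij}\lesssim C\,\lambda_1(\Delta)^\alpha\, m^{-2\alpha}$, which is exactly the claimed $\dist(i,j)^{-2\alpha}$ rate.

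The main obstacle is the uniformity of the constant in the statement ``independent of $\alpha$'', together with the asymptotic nature of Bernstein's constant $\beta(2\alpha)$. The notation $\lesssim$ signals that only asymptotic validity in $\dist(i,j)$ is claimed, so we rely on Bernstein's limit, Jackson-type bounds for non-asymptotic control. We would bound $\sup_{\alpha\in(0,1)}\beta(2\alpha)$ using known explicit upper bounds for the Bernstein constants, since these are bounded on compact exponent ranges. We would also bound $\lambda_1(\Delta)^\alpha\le \max(1,\lambda_1(\Delta))\le \max(1,2\max_i D_{ii})$, which depends only on the graph and not on $i,j,\alpha$. Collecting these gives $C$. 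Since the paper attributes the result to \cite{Schweitzer_2022_decay_bounds_fL}, we would cite that reference for the sharp constant and present the argument above as the derivation of the rate.
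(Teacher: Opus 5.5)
Your argument is correct, and it takes a genuinely different route from the one the paper relies on. The paper gives no proof and imports the bound from Schweitzer. As far as I recall, his approach uses the Lévy--Khintchine representation $\lambda^\alpha=\frac{\alpha}{\Gamma(1-\alpha)}\int_0^\infty(1-e^{-t\lambda})\,t^{-1-\alpha}\,dt$. For $i\neq j$ this gives $(\fL)_{ij}=-\frac{\alpha}{\Gamma(1-\alpha)}\int_0^\infty(e^{-t\Delta})_{ij}\,t^{-1-\alpha}\,dt$. One then inserts Gaussian-type off-diagonal bounds $|(e^{-t\Delta})_{ij}|\lesssim e^{-\dist(i,j)^2/(ct)}$ in the appropriate range of $t$, with the trivial bound elsewhere. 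Integrating yields a factor $\frac{\Gamma(1+\alpha)}{\Gamma(1-\alpha)}\,(c/\dist(i,j)^2)^{\alpha}$: the same $\dist(i,j)^{-2\alpha}$ rate, with an $\alpha$-dependent prefactor bounded on $(0,1)$.

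You instead approximate $t^\alpha$ directly by polynomials on $[0,\lambda_1(\Delta)]$, using only the sparsity pattern of $\Delta$. The substitution $t=\lambda_1(\Delta)s^2$ makes it transparent why the exponent is $2\alpha$ rather than $\alpha$: an endpoint singularity is approximated at twice the rate of an interior one. The integral route buys two things your route lacks:
\begin{itemize}
\item generality, since it handles all Bernstein functions at once;
\item the sign for free, since $e^{-t\Delta}$ is entrywise positive on a connected graph, so $w^\alpha_{ij}>0$ drops out of the same formula. Your polynomial bound only controls $|(\fL)_{ij}|$ and has to import positivity, as you do.
\end{itemize}

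The one step to tighten is the uniformity in $\alpha$. Bernstein's limit $n^{2\alpha}E_n(|s|^{2\alpha})\to\beta(2\alpha)$ holds pointwise in $\alpha$. So bounding $\sup_\alpha\beta(2\alpha)$ does not by itself give one constant valid for all distances and all $\alpha$. Moreover, on a fixed graph $\dist(i,j)\le\diam$, so a purely asymptotic reading says little.

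Use Jackson's theorem with absolute constants instead:
\begin{itemize}
\item for $p=2\alpha\le1$, $\omega(|s|^p,\delta)\le\delta^p$;
\item for $p\in(1,2)$, use the first-derivative Jackson estimate together with $\omega(p\,\mathrm{sgn}(s)|s|^{p-1},\delta)\le 4\delta^{p-1}$.
\end{itemize}
This gives $E_n(|s|^{2\alpha})\le C_J\, n^{-2\alpha}$ for all $n\ge2$, with $C_J$ absolute. Using $2\dist(i,j)-2\ge\dist(i,j)$, you get
\[
w^\alpha_{ij}\le C_J\max\{1,\lambda_1(\Delta)\}\,\dist(i,j)^{-2\alpha}
\]
for every $\dist(i,j)\ge2$ and every $\alpha\in(0,1)$. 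Bernstein's theorem is then needed only for the sharp asymptotic constant.
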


Therefore, according to the above bound, we have that the strength of the non-local interactions in $\fG$ decreases as $\alpha\rightarrow1^{-}$. 
In other words, the larger the parameter $\alpha$, the smaller the difference between the non-local dynamics induced by $\fL$ and the standard (local) dynamics induced by $\Delta$ tends to be. Indeed, the following basic lemma holds.  

\begin{lemma} \label{lemma:limit_fL_alpha_to_1}
For $\alpha\rightarrow 1^-$ we have that, entrywise, $\fL\rightarrow \Delta$.
\begin{proof}
The proof of this lemma comes directly from the fact that, as $\alpha\rightarrow 1^-$, $\lambda_i(\fL)\rightarrow\lambda_i(\Delta)$  for each $i=1,\dots,n-1$.   
\end{proof}
\end{lemma}

On the other hand, when $\alpha$ approaches the left side of the interval $(0,1)$, we have the following lemma.

\begin{lemma} \label{lemma:limit_fL_alpha_to_0}
\begin{enumerate}[label=\normalfont{(\alph*)}]
    \item For $\alpha \rightarrow 0^+$ we have that      
        \begin{equation*}
    \fL  \longrightarrow X:=\frac{1}{n}\begin{bmatrix}
             n-1 & -1 
             & \cdots &-1 \\
             -1 & \ddots & \ddots & \vdots \\
             \vdots & \ddots & \ddots  & -1 \\
             -1& \cdots& -1 & n-1
            \end{bmatrix} =  I - \frac{1}{n}\mathbf{1}\mathbf{1}^T,
               \end{equation*}
               with respect to the spectral norm $\|\cdot\|_2$; and
   \item the convergence rate is governed by the speed at which
\begin{equation*}
     \lambda_{i^*}(\Delta)^\alpha\longrightarrow 1, \text{ as } {\alpha \rightarrow 0^+},
\end{equation*}
where 
\begin{equation*}
i^* =
  \begin{cases}
    1 & \text{ if } \lambda_1(\Delta)> 1/\lambda_{n-1}(\Delta),\\
    n-1 & \text{ otherwise. }
\end{cases}  
\end{equation*}
\end{enumerate}
\end{lemma}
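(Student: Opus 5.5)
We work directly with the spectral decomposition $\Delta = Q\Lambda Q^T$, where $\mathbf{q}_n = (1/\sqrt{n})\onevec$. Since $G$ is connected, $\lambda_k(\Delta)>0$ for $k=1,\dots,n-1$. The orthogonal projector onto $\onevec^\perp$ is
\begin{equation*}
\sum_{k=1}^{n-1}\mathbf{q}_k\mathbf{q}_k^T = I - \mathbf{q}_n\mathbf{q}_n^T = I - \frac{1}{n}\onevec\onevec^T = X .
\end{equation*}
With the convention $0^\alpha = 0$ (so that $\lambda_n(\fL)=0$), we can write $\fL = \sum_{k=1}^{n-1}\lambda_k(\Delta)^\alpha\,\mathbf{q}_k\mathbf{q}_k^T$. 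Subtracting the two expressions gives
\begin{equation*}
\fL - X = \sum_{k=1}^{n-1}\bigl(\lambda_k(\Delta)^\alpha - 1\bigr)\mathbf{q}_k\mathbf{q}_k^T = Q\,\diag\bigl(\lambda_1(\Delta)^\alpha-1,\dots,\lambda_{n-1}(\Delta)^\alpha-1,0\bigr)\,Q^T .
\end{equation*}
This is a symmetric matrix written in orthonormal eigen-form, so its spectral norm is the largest eigenvalue in absolute value:
\begin{equation*}
\|\fL - X\|_2 = \max_{1\le k\le n-1}\bigl|\lambda_k(\Delta)^\alpha - 1\bigr| .
\end{equation*}
Each positive $\lambda_k(\Delta)$ satisfies $\lambda_k(\Delta)^\alpha = e^{\alpha\ln\lambda_k(\Delta)} \to 1$ as $\alpha\to0^+$. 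Since the maximum is over finitely many terms, the norm tends to zero, which proves (a).

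For (b), we identify which index attains the maximum. The function $x\mapsto |x^\alpha-1| = |e^{\alpha\ln x}-1|$ is increasing in $|\ln x|$: on both sides of $x=1$ it grows as $x$ moves away from $1$ on the log scale. Hence the maximum over $k$ is attained at one of the extreme eigenvalues, either $\lambda_1(\Delta)$ (largest $\ln$) or $\lambda_{n-1}(\Delta)$ (smallest $\ln$).

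To decide between them, we compare $|\ln\lambda_1(\Delta)|$ with $|\ln\lambda_{n-1}(\Delta)|$. Consider the typical case $\lambda_{n-1}(\Delta)\le 1\le\lambda_1(\Delta)$; Lemma~\ref{lemma:bound_algebraic_connectivity} suggests this is common. There the comparison reads $\ln\lambda_1(\Delta)$ versus $-\ln\lambda_{n-1}(\Delta)$, i.e. $\lambda_1(\Delta)$ versus $1/\lambda_{n-1}(\Delta)$, which is exactly the criterion defining $i^*$.

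One subtlety is that for $x>1$ and $y=1/x$ the deviations $x^\alpha-1$ and $1-y^\alpha$ are not equal. However, both equal $\alpha|\ln x|+O(\alpha^2)$, so the leading-order rate as $\alpha\to0^+$ is $\alpha|\ln\lambda_{i^*}(\Delta)|$, and the convergence speed is governed by $\lambda_{i^*}(\Delta)^\alpha\to1$. If instead all nonzero eigenvalues lie on the same side of $1$, monotonicity immediately picks the extreme one. This case also agrees with the stated criterion: if all eigenvalues exceed $1$, then $\lambda_1(\Delta)>1>1/\lambda_{n-1}(\Delta)$ and $i^*=1$; if all are below $1$, then $i^*=n-1$.

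The main obstacle is this interpretive step in (b). The claim "governed by" has to be read asymptotically, namely via the first-order expansion in $\alpha$, because the exact maximizer at a finite $\alpha$ can differ slightly in the reciprocal-tie region. The plan is therefore to state (b) as
\begin{equation*}
\|\fL - X\|_2 \sim \bigl|\lambda_{i^*}(\Delta)^\alpha - 1\bigr| \sim \alpha\,\bigl|\ln\lambda_{i^*}(\Delta)\bigr| \qquad (\alpha\to0^+),
\end{equation*}
which settles the choice of $i^*$ rigorously.
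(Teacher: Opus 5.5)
Your proof is correct and follows essentially the same route as the paper. You write $\fL - X = Q\,\diag(\lambda_1(\Delta)^\alpha-1,\dots,\lambda_{n-1}(\Delta)^\alpha-1,0)\,Q^T$ and reduce the spectral norm to the two extreme eigenvalues $\lambda_1(\Delta)$ and $\lambda_{n-1}(\Delta)$. You then settle (b) by the same three-case split (all nonzero eigenvalues above $1$, all at most $1$, mixed), using the first-order expansion $\lambda^\alpha-1\sim\alpha\ln\lambda$ in the mixed case. Your explicit remark that the criterion for $i^*$ is an asymptotic one (the exact maximizer at a fixed $\alpha$ can differ from $i^*$ when $\lambda_1(\Delta)\approx 1/\lambda_{n-1}(\Delta)$) is a useful precision that the paper leaves implicit.
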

\begin{proof}
Let $(\Lambda,Q)$ the eigenpair of $\Delta$ and let $\lambda_1(\Delta)\geq \dots \geq \lambda_{n-1}(\Delta)>\lambda_{n}(\Delta)=0$ the diagonal entries of $\Lambda$. Note that $X = Q \diag(1, \dots, 1, 0)Q^T$, as the normalized eigenvector of $\Delta$ corresponding to 0 is $(1/\sqrt{n})\mathbf{1}$. Hence
\begin{align*} \|\fL - X\|_2 &= \|\Lambda^\alpha - \diag(1, \dots, 1, 0)\|_2 \\
&=
\max_{i=1,...,n-1}{|\lambda_i(\Delta)^\alpha - 1|} = \max\{|\lambda_1(\Delta)^\alpha - 1|, |\lambda_{n-1}(\Delta)^\alpha
 - 1|\}.
\end{align*}
 The proof of~(a) follows from the fact that, as $\alpha\rightarrow0^+$, $\lambda_i(\Delta)^\alpha\rightarrow 1$ for each $i=1,\dots,n-1$. To prove (b), define $s=\argmin_{i=1,\dots,n}\{\lambda_{i}(\Delta)\leq1\}$ and consider three cases:
\begin{enumerate}
    \item if $s=n$, then $\lambda_{1}(\Delta)\geq\lambda_{n-1}(\Delta)>1$. Hence, the convergence rate is governed by the pace at which $\lambda_{1}(\Delta)^\alpha\rightarrow1$;
    \item if $s=1$, then $1\geq \lambda_{1}(\Delta)\geq\lambda_{n-1}(\Delta)$. Therefore, the convergence rate is determined by the speed at which
    $\lambda_{{n-1}}(\Delta)^\alpha\rightarrow1$;
    \item if $1< s < n$, then $\lambda_1(\Delta)>1>\lambda_{n-1}(\Delta)$.
    For $\alpha\rightarrow0^+$ we have that 
    \begin{align*}
        \lambda_{1}(\Delta)^{\alpha} -1 &\sim \alpha \log \lambda_1(\Delta)\\
        1- \lambda_{n-1}(\Delta)^{\alpha} &\sim -\alpha \log \lambda_{n-1}(\Delta).
    \end{align*}
    Therefore,
    \begin{align*}
        \|\Delta^\alpha-X\|_2\sim \alpha \max\left\{\log \lambda_1(\Delta),\log\frac{1}{\lambda_{n-1}(\Delta)}\right\}, \text{ as } \alpha\rightarrow0^+,
    \end{align*}
    which means that the convergence rate is given by the rate at which $\lambda_{{1}}(\Delta)^\alpha\rightarrow1$ if $\lambda_1(\Delta)>1/\lambda_{{n-1}}(\Delta)$, or $\lambda_{{n-1}}(\Delta)^\alpha\rightarrow1$ otherwise. 
\end{enumerate}
\end{proof}

\begin{remark}
 Note that $\|\Delta^\alpha-X\|_2\rightarrow0,$ for $\alpha\rightarrow0^+$ implies $\fL \rightarrow X$, for $\alpha\rightarrow0^+$ entrywise. Moreover, $X$ is independent of the topology of the graph and it does not depend on the edge-weight function $w$.  
\end{remark}

\subsection{Non-local diffusion via the fractional Laplacian}\label{subsec:model_diffusion}

The fractional Laplacian can be used to model \emph{non-local} diffusion processes on a graph in which mass/information is allowed to make ``long-range jumps'', i.e., travel from a node $i$ to a node $j$ in one step even if they are not connected by an edge. More precisely, let us consider the continuous time process on the fractional graph $\fG$ given by:
\begin{equation}\label{eq:non-local_heat_equation}
\begin{cases}
    \diff{\mathbf{y}}{t} +\fL\mathbf{y}=0, & t>0, \\
    \mathbf{y}(0) = \mathbf{y}_0. 
\end{cases}
\end{equation}
This can be considered as an analogue of \eqref{eq:simple_heat_equation} where the local dynamics induced by $\Delta$ is replaced by the non-local ``superdiffusive'' dynamics induced by $\fL$. The solution of \eqref{eq:non-local_heat_equation} for $t\ge0$ is given by 
\begin{equation*} \label{eq:solution_non-local_heat_equation}
    \mathbf{y}(t) = e^{-\fL t} \mathbf{y}_0 = \frac{1}{n}(\mathbf{1}^T\mathbf{y}_0)\mathbf{1} + \sum_{k=1}^{n-1} e^{-\lambda_k(\fL)t}(\mathbf{q}_k^T\mathbf{y}_0)\mathbf{q}_k,
\end{equation*}
where $\lambda_1(\fL)\geq\dots\geq\lambda_{n-1}(\fL)$ are the positive eigenvalues of $\fL$ and $\mathbf{q}_1,\dots,\mathbf{q}_{n-1}$ are the relative orthogonal and normalized eigenvectors. Moreover, it is easy to see that $\yvec(t)$ converges to the consensus equilibrium $\overline{\yvec}_{0}=(1/n)(\onevec^T\mathbf{y}_0)\onevec$ as $\exp(-\lambda_{n-1}(\fL)t)$.

It is interesting to compare the speed at which $\mathbf{y}(t)$ and $\mathbf{x}(t)$, the solution of~\eqref{eq:simple_heat_equation}, converge to the same consensus equilibrium $\consensus$ if both start with the initial distribution $\xvec_{\zerovec}\in\R^{n}$. Intuitively, one would expect the ability to make long-jumps given by the non-local ``superdiffusive'' operator $\fL$ to speed up the convergence of $\yvec(t)$, when compared to $\xvec(t)$. 
However, as already noted by Estrada in \cite{Estrada_2021_fractional_vs_path_laplacians}, this ultimately depends on the graph, as it could happen that $\lambda_{n-1}(\Delta)\geq\lambda_{n-1}(\fL)$, i.e., the fractional graph $\fG$ has a smaller algebraic connectivity than $G$. In fact, the following lemma holds.
\begin{lemma} \label{lemma:eigenvalues_of_L_and_fL}
    Let $\lambda_i(\Delta)$ and $\lambda_i(\fL)$, the $i$-th eigenvalues of $\Delta$ and $\fL$, and let $s=\argmin_{i=1,\dots,n}\{\lambda_{i}(\Delta)\leq1\}$. Then,
\begin{enumerate}[label=\normalfont{(\roman*)}]
    \item $\lambda_i(\fL) \leq \lambda_i(\Delta)$, for each  $i=1,\dots,s-1$; 
    \item  $\lambda_i(\fL) \geq \lambda_i(\Delta)$,  for each  $i=s,\dots,n-1$;
    \item $\lambda_n(\fL)=\lambda_n(\Delta)=0$.
\end{enumerate}
\end{lemma}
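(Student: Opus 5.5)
The argument rests on the definition $\fL = Q\Lambda^\alpha Q^T$. The eigenvalues of $\fL$ are exactly $\lambda_i(\Delta)^\alpha$, with the same eigenvectors $\mathbf{q}_i$. The map $x\mapsto x^\alpha$ is strictly increasing on $[0,\infty)$ for $\alpha\in(0,1)$, so the ordering $\lambda_1(\Delta)\geq\dots\geq\lambda_n(\Delta)=0$ carries over to $\lambda_1(\fL)\geq\dots\geq\lambda_n(\fL)=0$. Hence the $i$-th eigenvalue of $\fL$ in decreasing order is $\lambda_i(\fL)=\lambda_i(\Delta)^\alpha$ for every $i$. This index-matching step is the only point where care is needed: it ensures we compare $\lambda_i(\Delta)$ with its own image rather than with some other eigenvalue. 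It is the main (mild) obstacle, and monotonicity settles it.

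Everything then reduces to the scalar inequality: for $x\geq 0$ and $\alpha\in(0,1)$,
\begin{equation*}
x^\alpha\leq x \text{ if } x\geq 1, \qquad x^\alpha\geq x \text{ if } 0\leq x\leq 1 .
\end{equation*}
For $x>0$ this follows by writing $x^\alpha - x = x^\alpha(1-x^{1-\alpha})$ and noting that $1-x^{1-\alpha}$ has the sign of $1-x$. The case $x=0$ gives equality.

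Next I would use the definition of $s$ as the first index with $\lambda_s(\Delta)\leq 1$. Such an index exists because $\lambda_n(\Delta)=0$. By the decreasing ordering, $\lambda_i(\Delta)>1$ for $i<s$ and $\lambda_i(\Delta)\leq 1$ for $i\geq s$. Applying the scalar inequality to $x=\lambda_i(\Delta)$ gives (i) for $i=1,\dots,s-1$ and (ii) for $i=s,\dots,n-1$. Item (iii) is immediate, since $0^\alpha=0$ and $\fL\onevec=\zerovec$ with $\mathbf{q}_n=(1/\sqrt n)\onevec$. If $s=1$, item (i) is vacuous. If $s=n$, item (ii) is vacuous. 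Neither case needs separate treatment.
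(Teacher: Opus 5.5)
Your proposal is correct and follows the same route as the paper, whose proof simply invokes $\lambda_i(\Delta^\alpha)=\lambda_i(\Delta)^\alpha$ together with $\alpha\in(0,1)$. You additionally spell out two points the paper leaves implicit: that monotonicity of $x\mapsto x^\alpha$ keeps the decreasing index order consistent, and the scalar comparison of $x^\alpha$ with $x$ on either side of $x=1$.
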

\begin{proof}
The proofs of (i)-(iii) follow from the fact that $\alpha\in(0,1)$, and the fact that $\lambda_i(\fL)=~\lambda_i(\Delta)^{\alpha}$ for each $i=1,\ldots,n$.
\end{proof}

In other words, if $G$ is such that 
$s=n$, by Lemma~\ref{lemma:eigenvalues_of_L_and_fL} the dynamics induced by $\fL$ causes a slower convergence rate of the solution to the equilibrium, when compared to the one induced by $\Delta$. An example of such a graph is presented below.

\begin{example}\label{example:toy_graph}
Consider the graph depicted in Figure \ref{fig:toy_graph}. A simple calculation yields 
\begin{equation*}
\Delta = 
\begin{bmatrix}
3  & -1 & -1 &  0  & -1 \\
-1 &  2 & -1 &  0  &  0 \\
-1 & -1 &  3 & -1  &  0 \\
0  &  0 & -1 &  2  & -1 \\
-1 &  0 &  0 & -1  &  2 \\
\end{bmatrix},\text{ and }
\lambda_{n-1}(\Delta)=1.382.    
\end{equation*}
Table \ref{tab:toy_graph_algebraic_connectivity} shows some values of $\lambda_{n-1}(\fL)$ relative to some choices of $\alpha\in(0,1]$. Since $\lambda_{n-1}(\Delta)\geq1$, we see that $\lambda_{n-1}(\fL)\leq\lambda_{n-1}(\Delta)$ for every selected value of $\alpha\in(0,1]$. 
\end{example}

\begin{figure}[thb]
    \centering
\begin{tikzpicture}[scale=1.5]
  \SetUpEdge[lw=1pt, color=black, labelcolor=white]
  \GraphInit[vstyle=Normal] 
  \SetGraphUnit{3}
  \tikzset{VertexStyle/.append style={fill}}
  \tikzset{EdgeStyle/.style={-, sloped, anchor=center}}

\node (G) at (-2,0.65) {$G:$};
   \Vertex[x=0 ,y=2]{1}
   \Vertex[x=1.4 ,y=1]{2}
   \Vertex[x=0.9,y=-0.5]{3}
   \Vertex[x=-0.9 ,y=-0.5]{4}
   \Vertex[x=-1.4 ,y=1]{5}

   \Edge[label=\tiny$1$](1)(2)
   \Edge[label=\tiny$1$](1)(3)
   \Edge[label=\tiny$1$](2)(3)
   \Edge[label=\tiny$1$](3)(4)
   \Edge[label=\tiny$1$](4)(5)
   \Edge[label=\tiny$1$](5)(1)
\end{tikzpicture}
    \hspace{10pt}
\begin{tikzpicture}[scale=1.5]
  \SetUpEdge[lw=1pt, color=black, labelcolor=white]
  \GraphInit[vstyle=Normal] 
  \SetGraphUnit{3}
  \tikzset{VertexStyle/.append style={fill}}
  \tikzset{EdgeStyle/.style={-, sloped, anchor=center}}

\node (G) at (-2,0.65) {$\fG:$};
   \Vertex[x=0 ,y=2]{1}
   \Vertex[x=1.4 ,y=1]{2}
   \Vertex[x=0.9,y=-0.5]{3}
   \Vertex[x=-0.9 ,y=-0.5]{4}
   \Vertex[x=-1.4 ,y=1]{5}

  \Edge[label=\tiny$0.24$](1)(2)

  \Edge[label=\tiny$0.24$](4)(5)
  \Edge[label=\tiny$0.23$](1)(5)
    \Edge[label=\tiny$0.24$](2)(3)

  \Edge[label=\tiny$0.23$](3)(4)
     \tikzset{EdgeStyle/.append style = {bend left=20}}
      \Edge[label=\tiny$0.19$](2)(4)
        \Edge[label=\tiny$0.20$](1)(4)
            \Edge[label=\tiny$0.20$](3)(5)
              \Edge[label=\tiny$0.23$](1)(3)
       \tikzset{EdgeStyle/.style={-, sloped, anchor=center,bend right}}
      \Edge[label=\tiny$0.19$](2)(5)
\end{tikzpicture}

\caption{Graph from Example \ref{example:toy_graph} and the respective fractional graph with $\alpha=0.1$ }
\label{fig:toy_graph}

\end{figure}

\begin{table}[t]
    \centering
        \caption{Algebraic connectivity of $\fG$ from Example \ref{example:toy_graph} when $\alpha$ varies in $(0,1]$}.
    \begin{tabular}{lcccccc}
    \toprule
 $\alpha$ & 0.1 & 0.3 & 0.5 & 0.7 & 0.9 & 1 \\
\midrule
$\lambda_{n-1}(\fL)$ & 1.033 & 1.102 & 1.176 & 1.254 & 1.338 & 1.382 \\
\bottomrule
    \end{tabular}
    \label{tab:toy_graph_algebraic_connectivity}
\end{table}

\section{Regularizing the fractional graph Laplacian} \label{sec:reg_fractional_laplacian}

In this section, after recalling the definition of the regularized fractional graph Laplacian (Section~\ref{subsec:regularized}), we present a method to compute it based on Boolean--Hadamard algebra (Section~\ref{subsec:building_regularized}). We then analyze its spectral properties in detail, with particular emphasis on its relation to the standard and fractional Laplacians (Section~\ref{subsec:spectral_properties_regularized}), and conclude with a discussion on the related computational aspects (Section~\ref{subsec:cost_regularized}).

\subsection{The (scaled) regularized fractional graph}
\label{subsec:regularized}

Given a graph $G=(\cV,w)$ and a parameter $\alpha\in(0,1)$, the \emph{regularized fractional graph} \cite{Bianchi2022} is defined as $\rfG = (\cV,\leftidx{_r}w^\alpha)$, where $\cV$ is the same node set of $G$, and $\leftidx{_r}w^\alpha$ is the nonnegative function that assigns to each pair of nodes $i\neq j$ the following weight:
\begin{equation*}
\leftidx{_r}{w}^{\alpha}_{ij}:=
\begin{cases}
    w_{ij}  & \text{ if } (i,j)\in\cE, \\
    w_{ij}^{\alpha} &  \text{ otherwise, } \\
\end{cases}
\end{equation*}
 where $w_{ij}^{\alpha}$ is defined as in Section~\ref{subsec:fractional_graph}. In other words, the function $\leftidx{_r}{w}^{\alpha}$ gives to the edge $(i,j)$ the same weight as that of $G$ if $i$ and $j$ are already connected in the original graph, while it gives to $(i,j)$ the same weight as the one in $G^\alpha$ otherwise.

Contrary to the fractional graph, the regularized fractional graph preserves by construction the original diffusion dynamics when restricted to the edges of the input graph. However, enforcing structural compatibility could result in a significant disparity between the weights assigned to existing edges and those associated with newly introduced long-range interactions. This imbalance may weaken the non-local exploratory behavior typical of the original (non-compatible) fractional diffusion. To compensate for this attenuation and recover the intended level of non-locality, a scaling parameter 
$\beta\ge1$ could be introduced that amplifies the contribution of the fractional component, counterbalancing the effect of regularization. We refer to \cite{Bianchi2022} for heuristics on how to choose the parameter $\beta$ in such a way that the decay depicted in Proposition \ref{prop:bound_decay_shweitzer} applies as well. 

In other words, one could consider
the \emph{scaled regularized fractional graph} defined as the graph $\srfG = (\cV,\leftidx{_r^\beta}{w}^{\alpha})$, where 
the edge-weight function is given by
\begin{equation*}
\leftidx{_r^\beta}{w}^{\alpha}_{ij}:=
\begin{cases}
    w_{ij}  & \text{ if } (i,j)\in\cE, \\
    \beta w_{ij}^{\alpha} & \text{ otherwise. }\\
\end{cases}
\end{equation*}
Similarly, the scaled regularized fractional Laplacian is defined as follows.
\begin{definition} 
Let $\alpha\in(0,1)$ and let $\beta\geq1$.  We define the \emph{scaled regularized fractional Laplacian} as
     \begin{equation} \label{eq:reg_laplacian_defin}
         \srfL = \diag(\srfW\mathbf{1})-\srfW,
         \end{equation}
         where $\srfW$ is the weight matrix of $\srfG$.
\end{definition}

For simplicity of presentation, we focus on the case where $\beta=1$, and denote by $\rfW$ and $\rfL$ the weight matrix and graph Laplacian of ${}_r^{1}\fG=:\rfG$. We stress that all the results for $\rfG$ that are given in this section can be extended to $\srfG$ for each real parameter $\beta>1$ as well.

\subsection{Building the regularized fractional graph}\label{subsec:building_regularized}
 Let $G$ be a graph with weight matrix $W$ and adjacency matrix $A$, let $\alpha\in(0,1)$, and let $\rfW=(\leftidx{_r}{w}^{\alpha}_{ij})$ be the weight matrix of $\rfG$. The matrix $\rfW$ can be obtained by manually substituting some entries of $W$ with some elements taken from $\fW$ according to the pattern given by $\cE$. However, this approach can be computationally inconvenient if $n$ is large and $G$ is sparse, as it can require $O(n^2)$ substitutions. In the following, we present an explicit formula for $\rfW$, and consequently for $\rfL$, which is useful for further investigations on its spectral properties and offers several computational advantages; see Sections~\ref{subsec:spectral_properties_regularized} and~\ref{subsec:cost_regularized} for more details.

Our formulas exploit the Boolean matrices and their properties. With the purpose of simplifying the presentation, we introduce the following definition. 

\begin{definition}\label{def:Asum}
    Let $A$ be a $n \times n$ Boolean matrix and let $U,V\in \R ^{n \times n}$. The \emph{$A$-sum} of $U$ and $V$ is the $n \times n$ matrix given by
    \begin{equation*} \label{eq:A-sum}
     U +_A V := U \circ A + V \circ A^{C},   
    \end{equation*}
    where $A^{C}$ is the complementary matrix of $A$.
\end{definition}

Some properties of this matrix operation are given in the Propositions \ref{prop:build_Z_with_A_sum}-\ref{prop:A-sum formula}. 

\begin{proposition} \label{prop:build_Z_with_A_sum}
Let $U=(u_{ij}),V=(v_{ij})$ in $\R^{n \times n}$ and let $\cS$ be a binary relation over the set $\{1,\dots,n\}$. Moreover, let $Z$ be such that 
\begin{equation*}
    (Z)_{ij} =z_{ij} :=
    \begin{cases}
        u_{ij} & \text{if } (i,j)\in\cS,  \\
        v_{ij} & \text{if } (i,j)\notin\cS.
    \end{cases}
\end{equation*}
Then, 
\begin{equation*}
    Z  = U +_{A^{}}V,
\end{equation*}    
where $A$ is the boolean matrix representing ${\cS}$ given by
\begin{equation*}
(A^{})_{ij}= a_{ij}:=
    \begin{cases}
        1 & \text{ if } (i,j)\in \cS, \\
        0 & \text{ if } (i,j)\notin \cS.
    \end{cases}
\end{equation*}
\end{proposition}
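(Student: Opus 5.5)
The plan is to verify the claimed identity entry by entry, since both sides are $n\times n$ matrices and the $A$-sum of Definition~\ref{def:Asum} is built only from entrywise operations. First, I will write the $(i,j)$ entry of the right-hand side. By the definition of the Hadamard product, $(U\circ A)_{ij}=u_{ij}a_{ij}$ and $(V\circ A^C)_{ij}=v_{ij}(A^C)_{ij}$. From the property $A+A^C=\onevec\onevec^T$ recalled in Section~\ref{subsec:lin_algebra}, we have $(A^C)_{ij}=1-a_{ij}$. Hence
\begin{equation*}
(U+_A V)_{ij}=u_{ij}a_{ij}+v_{ij}(1-a_{ij}).
\end{equation*}

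Next, I will split into two cases according to whether $(i,j)\in\cS$, which by the definition of $A$ is the same as whether $a_{ij}=1$ or $a_{ij}=0$.
\begin{itemize}
\item If $(i,j)\in\cS$, then $a_{ij}=1$ and the expression reduces to $u_{ij}$.
\item If $(i,j)\notin\cS$, then $a_{ij}=0$ and the expression reduces to $v_{ij}$.
\end{itemize}
In both cases the entry coincides with $z_{ij}$, so $Z=U+_AV$.

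No step is a real obstacle. The only point needing care is that $A$ is genuinely Boolean, with entries in $\{0,1\}$. This guarantees that $a_{ij}$ and $1-a_{ij}$ act as complementary indicator functions of $\cS$, which is exactly what makes the two cases exhaustive and mutually exclusive.
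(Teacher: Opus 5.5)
Your proposal is correct and follows essentially the same route as the paper. Both arguments reduce to the entrywise identity $u_{ij}a_{ij}+v_{ij}(1-a_{ij})$ via $A^C=\onevec\onevec^T-A$; the paper starts from $z_{ij}$ and assembles $U\circ A+V\circ(\onevec\onevec^T-A)$, while you expand the right-hand side and check the two cases.
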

\begin{proof}
From the definition of $Z$ and $A$ we have that, entrywise,
\begin{equation*}
    (Z)_{ij}=z_{ij} = u_{ij}a^{}_{ij} + v_{ij}(1- a^{}_{ij}).
\end{equation*}
Therefore, $Z= U\circ A^{} + V \circ (\onevec\onevec^T - A^{})$. The fact that $A^C=\onevec\onevec^T - A^{}$ yields the conclusion.
\end{proof}

\begin{proposition}\label{prop:A-sum formula} In the notation of Definition \ref{def:Asum},
\begin{equation} \label{eq:A-sum formula}
    U +_A V = V + (U-V) \circ A,
\end{equation}
and, equivalently,
\begin{equation} \label{eq:A-sum formula complement}
    U +_A V = U + (V-U) \circ A^C.
\end{equation}
\end{proposition}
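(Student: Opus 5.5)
The identity is purely algebraic, and I will prove it directly from Definition~\ref{def:Asum} using only the distributivity of the Hadamard product over matrix addition and the relation $A + A^C = \onevec\onevec^T$ recalled in Section~\ref{subsec:lin_algebra}. The key observation is that $\onevec\onevec^T$ is the identity element for $\circ$, so $V = V\circ\onevec\onevec^T = V\circ A + V\circ A^C$.

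For \eqref{eq:A-sum formula}, I start from the definition $U +_A V = U\circ A + V\circ A^C$. I substitute $A^C = \onevec\onevec^T - A$ and expand by distributivity to get $U\circ A + V - V\circ A$. Collecting the two terms involving $A$ gives $V + (U-V)\circ A$, as claimed.

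For \eqref{eq:A-sum formula complement}, I argue symmetrically. I substitute $A = \onevec\onevec^T - A^C$ into the term $U\circ A$, which gives $U - U\circ A^C + V\circ A^C = U + (V-U)\circ A^C$.

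As a sanity check, one can instead argue entrywise, as in the proof of Proposition~\ref{prop:build_Z_with_A_sum}. There $(U +_A V)_{ij} = u_{ij}a_{ij} + v_{ij}(1-a_{ij}) = v_{ij} + (u_{ij}-v_{ij})a_{ij}$, which is exactly the $(i,j)$ entry of the right-hand side of \eqref{eq:A-sum formula}.

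There is no real obstacle here. The only point needing care is to note that distributivity of $\circ$ over $+$ and $-$, and the fact that $\onevec\onevec^T$ is the unit for $\circ$, both hold for arbitrary real matrices, so no assumption beyond $A$ being Boolean (which makes $A^C = \onevec\onevec^T - A$ valid) is required.
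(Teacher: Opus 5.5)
Your proof is correct and follows essentially the same route as the paper: substitute $A^C=\onevec\onevec^T-A$ into the definition and expand by distributivity of $\circ$ to obtain \eqref{eq:A-sum formula}. The paper omits \eqref{eq:A-sum formula complement} as identical, whereas you write out the symmetric computation explicitly; the entrywise check is a harmless extra.
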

\begin{proof} The proof of \eqref{eq:A-sum formula} immediately follows from
    \begin{align*}
        U +_A V  &= U \circ A + V \circ \mathbf{1}\mathbf{1}^T -  V \circ A\\
        & = V + (U-V) \circ A.
    \end{align*}
The proof of (\ref{eq:A-sum formula complement}) is identical and is therefore omitted. 
\end{proof}

Thanks to the previous propositions, we can provide an
explicit formula for the weight matrix of the regularized fractional graph in terms of the weight and the adjacency matrices of the original graph and of the fractional graph Laplacian.

\begin{proposition}\label{prop:build_reg_adj_matrix}
For each $\alpha\in(0,1)$ we have that
    \begin{equation}\label{eq:reg_adj_matrix}
        \rfW = \fL \circ (I+A) - \fL +W.
    \end{equation}
    \begin{proof}
    Since $A$ can be seen as the Boolean matrix that represents $\cE$ as a relation over the set $\{1,\dots,n\}$, we can apply Proposition \ref{prop:build_Z_with_A_sum} to get 
    \begin{equation*}
    \rfW = W +_A \fW. 
    \end{equation*}
Now, by using \eqref{eq:A-sum formula} and remembering that $\fW = \diag(\fL) - \fL =  \fL \circ I - \fL $, 
we have 
        \begin{align*}
        \rfW &= \fW +(W - \fW) \circ A   \\
            &= \fW + W\circ A  - \fW \circ A   \\
            & = \fL \circ I - \fL  +W + \fL \circ A - \diag(\fL) \circ A \\
            & =  \fL \circ (I+A) - \fL +W  - \diag(\fL) \circ A .
        \end{align*}
The fact that $\diag(\fL) \circ A=0$ since $A$ is hollow yields the conclusion.
    \end{proof}
\end{proposition} 

Before stating our formula for the regularized fractional graph Laplacian $\rfL$, we introduce the following definition.

\begin{definition} \label{def:restr_f_laplacian}
    Let $A$ be the adjacency matrix of a graph $G$ and let $\alpha\in(0,1)$. Then, the \emph{restricted fractional graph Laplacian} is defined as 
    \begin{equation}\label{eq:restr__fractional_laplacian}
     \restr(\fL):= \fL \circ A - \diag(\left[\fL \circ A\right] \mathbf{1}),  
    \end{equation}
    that is, the graph Laplacian of the graph obtained from $\fG$ by eliminating all edges that are not in $G$. 
\end{definition} 

\begin{theorem}\label{thm:reg_frac_lap}
For each $\alpha\in(0,1)$ we have that 
\begin{equation} \label{eq:reg_laplacian_formula}
        \rfL = \Delta + \Delta^{\alpha} - \restr(\fL).
    \end{equation}
    \begin{proof}
 By definition, we know that
     \begin{equation*} 
         \rfL = \diag(\rfW\mathbf{1})-\rfW.
     \end{equation*}
     Then, by applying Proposition \ref{prop:build_reg_adj_matrix} we get
    \begin{align*}
     \rfL & = \diag((\fL \circ (I+A) - \fL +W)\onevec)-(\fL \circ (I+A) - \fL +W)\\
     &=  \diag(\fL \circ I) + \diag(\left[\fL \circ A\right] \mathbf{1})   - \diag(\fL\mathbf{1}) + \diag(W\onevec) \\
     & \quad - \fL\circ I - \fL \circ A + \fL -W  \\
     &= \diag(\left[\fL \circ A\right] \mathbf{1}) - \diag(\fL\mathbf{1}) - \fL \circ A  + \fL + \Delta \\
     &= \Delta  + \fL - \restr(\fL)   - \diag(\fL\mathbf{1}).  
    \end{align*}
    The fact that $\diag(\fL\mathbf{1})=0$ for each $\alpha\in(0,1)$ yields the conclusion.
    \end{proof}
\end{theorem} 

\subsection{Spectral properties of the regularized fractional Laplacian}\label{subsec:spectral_properties_regularized}

In this section, we study the relation between the spectrum of the regularized fractional Laplacian $\rfL$ and the spectra of $\Delta$ and $\fL$. 

\begin{proposition} \label{prop:shift_eigenvalue_L}
Let $\alpha\in(0,1)$ and let $\lambda_1(\rfL)\geq \dots \geq\lambda_{n-1}(\rfL)$ be the positive eigenvalues of $\rfL$. Then, for all $i=1,\dots,n-1$,
\begin{equation*} \label{eq:eigs_reg_frac_lap}
  \lambda_i(\Delta) + \lambda_1(E_{\alpha}) \geq \lambda_i(\rfL) \geq \lambda_i(\Delta),   
\end{equation*}
where $\lambda_1(\Delta)\geq\dots\geq \lambda_{n-1}(\Delta)$ are the positive eigenvalues of $\Delta$, and $\lambda_1(E_{\alpha})$ is the largest eigenvalue of $E_{\alpha}:=\fL - \restr(\fL)$. Moreover,
\begin{equation*} \label{eq:eigs_frac_lap_limits}
    \lambda_i(\rfL) \rightarrow \lambda_i(\Delta), \, \text{ for each }\, i=1,\dots,n-1, \, \text{ when } \,\alpha\rightarrow1^-.
\end{equation*}
\begin{proof}
It is easy to see that $E_{\alpha}$ is the Laplacian matrix of the graph obtained from $G^{\alpha}$ by removing all the edges in $G$, hence it is symmetric and semipositive definite. From Theorem~\ref{thm:reg_frac_lap} we know that $\rfL=\Delta + E_{\alpha}$, and therefore, a simple consequence of Weyl's inequality (see, e.g., \cite[Example 7.5.2]{meyer2000matrix}) gives us
\begin{equation*}
  \lambda_i(\Delta) + \lambda_1(E_{\alpha}) \geq \lambda_i(\rfL) \geq \lambda_i(\Delta) + \lambda_n(E_{\alpha}),    
\end{equation*}
where $\lambda_1(E_{\alpha}),\, \lambda_n(E_{\alpha})$ are the largest and the smallest eigenvalue of $E_{\alpha}$, respectively. Observing that $\lambda_n(E_{\alpha})=0$ and that, entrywise, $E_{\alpha}\rightarrow 0$ when $\alpha\rightarrow1^-$, yields the conclusion. 
\end{proof}
\end{proposition}

\begin{remark}
The spectrum of $\rfL$ exhibits the same behavior as the spectrum of $\fL$ when $\alpha\rightarrow1^-$. In fact, as guaranteed by Lemma \ref{lemma:limit_fL_alpha_to_1} and Proposition \ref{prop:shift_eigenvalue_L}, both spectra converge (elementwise) to the spectrum of $\Delta$ as $\alpha\rightarrow1^-$.     
\end{remark}

\begin{proposition}\label{prop:shift_eigenvalue_fL}
Let $G$ be a graph with $n$ nodes and let $\epsilon\geq0$ such that
\begin{equation} \label{eq:hypothesis_on_the_Laplacian}
    (\Delta)_{ij}  + \,\epsilon< - \frac{1}{n}  , \text{ for each $(i,j)\in\cE$.} 
\end{equation}
Then there exists an $\alpha_\epsilon\in(0,1)$ such that, for all $i=1,\dots,n-1$, and for all $\alpha\in(0,\alpha_{\epsilon})$, 
\begin{equation*}
  \lambda_i(\rfL) \geq \lambda_i(\fL),   
\end{equation*}
where $\lambda_i(\rfL)$ and $\lambda_i(\fL)$, are the $i$-th eigenvalues of $\rfL$ and $\fL$, respectively.
\begin{proof}
By Theorem \ref{thm:reg_frac_lap} we know that $\rfL = \fL + \Delta - \restr(\fL)$ for all $\alpha\in(0,1)$. Letting $P_{\alpha}:=\Delta - \restr(\fL)$, we can use the same reasoning of the proof of Proposition \ref{prop:shift_eigenvalue_L} to get
\begin{equation*}
  \lambda_i(\rfL) \geq \lambda_i(\fL) + \lambda_n(P_{\alpha}), \text{ for each } i=1,\dots,n-1,      
\end{equation*}
where $\lambda_n(P_{\alpha})$ is the smallest eigenvalue of $P_\alpha$. Notice that $P_\alpha\mathbf{1}= \Delta\onevec - \restr(\fL)\onevec =\mathbf{0}$. In the remaining, assuming \eqref{eq:hypothesis_on_the_Laplacian}, we will prove that there exists an $\alpha_{\epsilon}\in(0,1)$ such that $(P_\alpha)_{ij}\leq0$ for each $i\neq j$. This will allow us to conclude that $\lambda_n(P_{\alpha})=0$ for any $\alpha \in (0,\alpha_{\epsilon})$. By recalling that $\restr(\fL) = \fL \circ A - \diag(\left[\fL \circ A\right] \mathbf{1})$ we have that, for each $i\neq j$,
\begin{equation*}
(P_{\alpha})_{ij} =
    \begin{cases}
    (\Delta)_{ij} - (\fL \circ A)_{ij} & \text{if $(i,j)\in\cE$},   \\
    0 & \text{ otherwise.}
    \end{cases}
\end{equation*}
Now, since for all $(i,j)\in \cE$,
\begin{equation*}
    \lim_{\alpha \rightarrow 0^+}  (\fL \circ A)_{ij} =  (X\circ A)_{ij} = - \frac{1}{n},
\end{equation*}
where $X$ is the matrix in Lemma \ref{lemma:limit_fL_alpha_to_0}, there exists an $\alpha_{\epsilon}\in (0,1)$ such that, for all $(i,j)\in\cE$ and for all $\alpha\in(0,\alpha_{\epsilon})$,
\begin{equation*}
  (P_{\alpha})_{ij}=(\Delta)_{ij} -(\fL\circ A)_{ij}  < (\Delta)_{ij} + \frac{1}{n} +  \epsilon.
\end{equation*}
From this, we can use \eqref{eq:hypothesis_on_the_Laplacian} to conclude that $(P_\alpha)_{ij}<0$ for each  $(i,j)\in \cE$, thus getting the proof.
\end{proof}
\end{proposition}

\begin{remark}
   If $G$ is unweighted, then \eqref{eq:hypothesis_on_the_Laplacian} in Proposition \ref{prop:shift_eigenvalue_fL} is satisfied for all $0\leq\epsilon<(n-1)/n$ since $(\Delta)_{ij} = -1$ for each $(i,j)\in\cE$.
\end{remark}

We are now ready to prove the following theorem, which can be obtained by combining the previous two propositions.

\begin{theorem}\label{thm:eigenvalues_of_L_fL_and_rfL}
Let $G$ and $\epsilon\geq0$ as in the hypotheses of Proposition \ref{prop:shift_eigenvalue_fL}, and let $\lambda_i(\Delta)$, $\lambda_i(\fL)$, $\lambda_i(\rfL)$, be the $i$-th eigenvalues of $\Delta$, $\fL$, and $\rfL$, respectively. Moreover, let $s=\argmin_{i=1,\dots,n}\{\lambda_{i}(\Delta)\leq1\}$. Then, there exists an $\alpha_{\epsilon}\in(0,1)$ such that, for each $\alpha\in(0,\alpha_\epsilon)$, we have
\begin{enumerate}[label=\normalfont{(\roman*)}]
    \item $\lambda_i(\rfL) \geq \lambda_i(\Delta) \geq \lambda_i(\fL)$, for each $i=1,\dots,s-1$; 
    \item $\lambda_i(\rfL) \geq \lambda_i(\fL) \geq \lambda_i(\Delta)$ ,  for each $i=s,\dots,n-1$;
    \item $\lambda_n(\rfL)=\lambda_n(\fL)=\lambda_n(\Delta)=0$.
\end{enumerate}
\end{theorem}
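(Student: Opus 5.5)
The plan is to combine Lemma~\ref{lemma:eigenvalues_of_L_and_fL}, Proposition~\ref{prop:shift_eigenvalue_L} and Proposition~\ref{prop:shift_eigenvalue_fL}, with no new spectral estimate needed. First, fix $\epsilon\geq0$ satisfying \eqref{eq:hypothesis_on_the_Laplacian}. Take $\alpha_\epsilon\in(0,1)$ to be exactly the threshold produced by Proposition~\ref{prop:shift_eigenvalue_fL}. Then for every $\alpha\in(0,\alpha_\epsilon)$ we have $\lambda_i(\rfL)\geq\lambda_i(\fL)$ for all $i=1,\dots,n-1$.

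Second, Proposition~\ref{prop:shift_eigenvalue_L} holds for every $\alpha\in(0,1)$, hence in particular on $(0,\alpha_\epsilon)$. It gives $\lambda_i(\rfL)\geq\lambda_i(\Delta)$ for all $i=1,\dots,n-1$. So on $(0,\alpha_\epsilon)$ the eigenvalue $\lambda_i(\rfL)$ dominates both $\lambda_i(\Delta)$ and $\lambda_i(\fL)$. This leaves only the relative order of $\lambda_i(\Delta)$ and $\lambda_i(\fL)$ to settle.

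Third, that order is given by Lemma~\ref{lemma:eigenvalues_of_L_and_fL}, which holds for every $\alpha\in(0,1)$. For $i\le s-1$ we have $\lambda_i(\Delta)>1$, so $\lambda_i(\fL)=\lambda_i(\Delta)^\alpha\le\lambda_i(\Delta)$. For $s\le i\le n-1$ we have $0<\lambda_i(\Delta)\le1$, so the reverse inequality holds. Chaining these with the second step gives (i) and (ii): in (i) we use $\lambda_i(\rfL)\geq\lambda_i(\Delta)$ followed by the lemma, and in (ii) we use $\lambda_i(\rfL)\geq\lambda_i(\fL)$ followed by the lemma.

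For (iii), $\Delta\onevec=\fL\onevec=\zerovec$. Also $\rfL\onevec=\zerovec$, since $\rfL$ is a graph Laplacian by definition \eqref{eq:reg_laplacian_defin}. All three matrices are symmetric positive semidefinite Laplacians, so their smallest eigenvalue is $0$.

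The only point needing care is bookkeeping rather than analysis. The ordering $\lambda_1\geq\dots\geq\lambda_n$ has to be used consistently across the three operators, with the index $i$ referring to the $i$-th largest eigenvalue of each. This is the convention in which the Weyl-type inequalities of the two propositions were stated, so the index-wise comparisons chain correctly. The degenerate cases $s=1$ and $s=n$ simply make one of (i) or (ii) vacuous.
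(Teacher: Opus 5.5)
Your proposal is correct and takes the same route as the paper's proof. Item (i) comes from Lemma~\ref{lemma:eigenvalues_of_L_and_fL} together with Proposition~\ref{prop:shift_eigenvalue_L}, valid for all $\alpha\in(0,1)$; item (ii) comes from the same lemma together with Proposition~\ref{prop:shift_eigenvalue_fL}, with $\alpha_\epsilon$ taken from that proposition; and item (iii) follows from the Laplacian structure of the three operators. Your extra bookkeeping on the common decreasing ordering and on the degenerate cases $s=1$ and $s=n$ just makes explicit what the paper leaves implicit.
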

\begin{proof}
(i) is a direct consequence of Lemma \ref{lemma:eigenvalues_of_L_and_fL} and Proposition \ref{prop:shift_eigenvalue_L}, therefore it holds for all $\alpha\in(0,1)$. (ii) is a consequence of Lemma \ref{lemma:eigenvalues_of_L_and_fL} and Proposition \ref{prop:shift_eigenvalue_fL}. (iii) follows from the definitions of $\fL$ and $\rfL$.
\end{proof}

Moreover, we have the following proposition.
\begin{proposition} \label{prop:alg_conn_L_fL_and_rfL}
Let $G$ be a graph such that $\lambda_{n-1}(\Delta)\geq1$. Then, $\lambda_{n-1}(\rfL) \geq \lambda_{n-1}(\Delta) \geq \lambda_{n-1}(\fL)$ for any choice of $\alpha\in(0,1)$.
\end{proposition}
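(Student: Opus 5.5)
The claim splits into two inequalities, and I will handle each separately using results already in hand. Neither needs the smallness restriction on $\alpha$ from Proposition~\ref{prop:shift_eigenvalue_fL}, which is why the statement holds for every $\alpha\in(0,1)$.

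\emph{First inequality.} The bound $\lambda_{n-1}(\rfL)\geq\lambda_{n-1}(\Delta)$ is the case $i=n-1$ of the lower bound in Proposition~\ref{prop:shift_eigenvalue_L}. That bound holds for every $\alpha\in(0,1)$ and every graph. Its ingredients are the decomposition $\rfL=\Delta+E_\alpha$ from Theorem~\ref{thm:reg_frac_lap}, the fact that $E_\alpha$ is a graph Laplacian (hence positive semidefinite), and Weyl's inequality. The hypothesis $\lambda_{n-1}(\Delta)\ge 1$ is not used here.

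\emph{Second inequality.} For $\lambda_{n-1}(\Delta)\geq\lambda_{n-1}(\fL)$, I will use the spectral definition of $\fL$, which gives $\lambda_{n-1}(\fL)=\lambda_{n-1}(\Delta)^\alpha$. Set $x:=\lambda_{n-1}(\Delta)$. By hypothesis $x\ge1$, and $\alpha\in(0,1)$, so $x^\alpha\le x$. Equivalently, $x^{1-\alpha}\ge 1$ because $\log x\ge0$ and $1-\alpha>0$. Equality holds only when $x=1$.

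This step is also a special case of Lemma~\ref{lemma:eigenvalues_of_L_and_fL}(i). If $\lambda_{n-1}(\Delta)>1$, then $s=n$, and (i) applied to every $i\le n-1$ gives the claim. The boundary case $\lambda_{n-1}(\Delta)=1$ falls under part (ii) of that lemma rather than part (i). It is covered by the direct computation above, since both eigenvalues equal $1$ there.

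\emph{Main obstacle.} There is essentially none. The only point requiring care is the indexing convention of $s$ at the boundary value $1$, and the direct argument $x^\alpha\le x$ for $x\ge1$ avoids it entirely. Chaining the two inequalities gives $\lambda_{n-1}(\rfL)\ge\lambda_{n-1}(\Delta)\ge\lambda_{n-1}(\fL)$, uniformly in $\alpha\in(0,1)$.
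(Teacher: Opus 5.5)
Your proof is correct and follows the paper's argument: the first inequality is the case $i=n-1$ of Proposition~\ref{prop:shift_eigenvalue_L}, and the second is $\lambda_{n-1}(\fL)=\lambda_{n-1}(\Delta)^\alpha\le\lambda_{n-1}(\Delta)$, which the paper obtains from Lemma~\ref{lemma:eigenvalues_of_L_and_fL} with $s=n$. Your separate treatment of $\lambda_{n-1}(\Delta)=1$ is a good catch: there $s\le n-1$ rather than $s=n$, a boundary case the paper's one-line proof glosses over, harmlessly since both sides equal $1$.
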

\begin{proof}
This result follows from the fact that by Proposition~\ref{prop:shift_eigenvalue_L}, we have the inequality
$\lambda_{n-1}({}_r\Delta^{\alpha})\geq \lambda_{n-1}(\Delta), \, \forall \alpha\in(0,1)$. Additionally, since $\lambda_{n-1}(\Delta)\geq1$, Lemma~\ref{lemma:eigenvalues_of_L_and_fL}
with $s = n$ gives $\lambda_{n-1}(\Delta^{\alpha}) \leq \lambda_{n-1}(\Delta)$.
\end{proof}

The take-home message of Theorem \ref{thm:eigenvalues_of_L_fL_and_rfL} and Proposition~\ref{prop:alg_conn_L_fL_and_rfL} is the following. For any graph $G$ the spectrum of $\rfL$ corresponds to a ``rightward shift'' of the spectrum of $\Delta$ for any choice of $\alpha\in(0,1)$. The same relation holds between the spectrum of $\rfL$ and the spectrum of $\fL$, although this is true only under certain hypotheses and under certain choices of the parameter $\alpha$. More precisely, this means that the algebraic connectivity of $\rfG$ is always greater than the algebraic connectivity of $G$ for all $\alpha\in(0,1)$, and, under specific conditions, possibly greater than the algebraic connectivity of $\fG$. Therefore, by replacing $\fL$ with $\rfL$ in \eqref{eq:non-local_heat_equation}, we are always in the presence of a superdiffusive behavior compared to the standard heat equation~\eqref{eq:simple_heat_equation}, in contrast to what happens with \eqref{eq:non-local_heat_equation} where the superdiffusion property of the operator $\fL$ fades when $\lambda_{n-1}(\Delta)\geq1$; see Section \ref{subsec:experiments_non_local_diffusion} for a numerical confirmation. 

\begin{remark}
One might think that rescaling the edge weights to guarantee $\lambda_{n-1}(\Delta)<1$ would recover the superdiffusive behavior of the fractional Laplacian. However, this approach brings some inconsistencies. Indeed, rescaling the weights is equivalent to consider two different heat equations, i.e., \eqref{eq:simple_heat_equation} with $c\Delta$ in place of $\Delta$ and \eqref{eq:non-local_heat_equation} with $c^\alpha\fL$ in place of $\fL$, $c\neq1$, whose solutions are no longer comparable.
\end{remark}

We conclude this section by reconsidering the graph from Example \ref{example:toy_graph}, which is depicted next to its regularized fractional graph with $\alpha=0.1$ in Figure \ref{fig:toy_graph_regularized}. As shown in Table \ref{tab:toy_graph_algebraic_connectivity_regularized}, the algebraic connectivity of $\rfG$ is greater than the one of $G$ and $\fG$, as expected, since $\lambda_{n-1}(\Delta)=1.382$ in this example.

\begin{figure}[h] 
    \centering
\begin{tikzpicture}[scale=1.5]
  \SetUpEdge[lw=1pt, color=black, labelcolor=white]
  \GraphInit[vstyle=Normal] 
  \SetGraphUnit{3}
  \tikzset{VertexStyle/.append style={fill}}
  \tikzset{EdgeStyle/.style={-, sloped, anchor=center}}

\node (G) at (-2,0.65) {$G:$};
   \Vertex[x=0 ,y=2]{1}
   \Vertex[x=1.4 ,y=1]{2}
   \Vertex[x=0.9,y=-0.5]{3}
   \Vertex[x=-0.9 ,y=-0.5]{4}
   \Vertex[x=-1.4 ,y=1]{5}

   \Edge[label=\tiny$1$](1)(2)
   \Edge[label=\tiny$1$](1)(3)
   \Edge[label=\tiny$1$](2)(3)
   \Edge[label=\tiny$1$](3)(4)
   \Edge[label=\tiny$1$](4)(5)
   \Edge[label=\tiny$1$](5)(1)
\end{tikzpicture}
    \hspace{10pt}
\begin{tikzpicture}[scale=1.5]
  \SetUpEdge[lw=1pt, color=black, labelcolor=white]
  \GraphInit[vstyle=Normal] 
  \SetGraphUnit{3}
  \tikzset{VertexStyle/.append style={fill}}
  \tikzset{EdgeStyle/.style={-, sloped, anchor=center}}

\node (G) at (-2,0.65) {$\rfG:$};
   \Vertex[x=0 ,y=2]{1}
   \Vertex[x=1.4 ,y=1]{2}
   \Vertex[x=0.9,y=-0.5]{3}
   \Vertex[x=-0.9 ,y=-0.5]{4}
   \Vertex[x=-1.4 ,y=1]{5}

  \Edge[label=\tiny$1$](1)(2)
  \Edge[label=\tiny$1$](1)(3)

  \Edge[label=\tiny$1$](4)(5)
  \Edge[label=\tiny$1$](1)(5)
    \Edge[label=\tiny$1$](2)(3)

  \Edge[label=\tiny$1$](3)(4)

       \tikzset{EdgeStyle/.style={-, sloped, anchor=center,bend right=25}}
      \Edge[label=\tiny$0.19$](2)(5)
              \Edge[label=\tiny$0.20$](1)(4)
                   \Edge[label=\tiny$0.19$](2)(4)
      \tikzset{EdgeStyle/.style={-, sloped, anchor=center,bend left=20}}
            \Edge[label=\tiny$0.20$](3)(5)
            \tikzset{EdgeStyle/.style={-, sloped, anchor=center}}
              \Edge[label=\tiny$1$](1)(3)
\end{tikzpicture}

\caption{Graph from Example \ref{example:toy_graph} and the corresponding regularized fractional graph for $\alpha=0.1$}
\label{fig:toy_graph_regularized}

\end{figure}

    \begin{table}[t!]
    \centering
        \caption{Algebraic connectivity of $\fG$ and $\rfG$ of the graph from Example \ref{example:toy_graph} when $\alpha$ varies in $(0,1]$}
    \begin{tabular}{lcccccc}
    \toprule
 $\alpha$ & 0.1 & 0.3 & 0.5 & 0.7 & 0.9 &  1 \\
\midrule
$\lambda_{n-1}(\fL)$ & 1.033 & 1.102 & 1.176 & 1.254 & 1.338 &  1.382\\
$\lambda_{n-1}(\rfL)$& 2.089 &2.033 & 1.934 & 1.777 & 1.540  &  1.382 \\
\bottomrule
    \end{tabular}

    \label{tab:toy_graph_algebraic_connectivity_regularized}
\end{table}

\subsection{Computational cost} \label{subsec:cost_regularized}

In this subsection, we estimate the computational cost of computing $\rfW$ and $\rfL$ and show how to exploit the sparsity of $A$ to make our machinery more efficient.

 We recall that, given a sparse $n\times n$ matrix $A$, computing either $M\circ A$ or  $M+A$ with $M\in\R^{n \times n}$, requires a number of operations of order $n$. As a consequence, building $\rfW$ by means of (\ref{eq:reg_adj_matrix}) avoids to explicitly form $A^C$, which is instead very dense (see Remark \ref{rmk:dense_graphs}) and keeps the asymptotic cost of computing $\rfW$ and $\rfL$ as the one of computing $\fL$; see the following propositions.  

\begin{proposition} \label{prop:comp_cost} In the above notation, and assuming the sparsity of $A$, the computational cost of building $\rfW$ with (\ref{eq:reg_adj_matrix}) is asymptotically the same of computing $\fL$ alone.  
\end{proposition}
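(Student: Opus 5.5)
The plan is a cost count of the operations in \eqref{eq:reg_adj_matrix}, $\rfW = \fL \circ (I+A) - \fL + W$, under the assumption that $\fL$ has already been computed. Computing $\fL$ requires at least forming a dense $n\times n$ matrix; in practice it goes through a spectral decomposition or a rational Krylov/matrix-function evaluation. Denote its cost by $c(\fL)$ and note that $c(\fL)=\Omega(n^2)$, since its output has $n^2$ nonzero entries (the fractional graph is complete). It then suffices to show that every remaining operation costs at most $O(n^2)$, and in fact most of them cost $O(n)$.

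I will go through the terms in order.
\begin{enumerate}
\item The matrix $I+A$ is sparse, with at most $n+O(n)=O(n)$ nonzeros, and forming it costs $O(n)$.
\item The Hadamard product $\fL\circ(I+A)$ only reads the entries of $\fL$ on the sparsity pattern of $I+A$. By the recalled fact on sparse Hadamard products, it costs $O(n)$ and returns a sparse matrix.
\item The sum $\fL\circ(I+A) + W$ is a sum of two sparse matrices ($W\circ A=W$, so $W$ is sparse). It costs $O(n)$.
\item Subtracting the dense $\fL$ costs $O(n^2)$ operations. More precisely, it can be arranged so that only the $O(n)$ entries on the pattern of $I+A$ are modified and the others are negated. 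One could even write $\rfW = S - \fL$ with $S$ sparse and update $-\fL$ in place on $O(n)$ positions, which gives $O(n)$ additional work beyond negation.
\end{enumerate}
Hence the cost of building $\rfW$ is $c(\fL)+O(n^2)=O(c(\fL))$, that is, asymptotically the same as computing $\fL$ alone.

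The key point to stress, and the only real obstacle, is to justify that no step forms $A^C$. By Remark~\ref{rmk:dense_graphs}, $A^C$ has almost $n^2$ nonzeros, and the naive $A$-sum $W\circ A+\fW\circ A^C$ would require a dense Hadamard product together with constructing the complement. Formula \eqref{eq:reg_adj_matrix}, obtained through Proposition~\ref{prop:A-sum formula}, replaces this by products against the sparse $I+A$ only. I would also remark that the $O(n^2)$ term from handling the dense $\fL$ is unavoidable but is dominated by (or equal in order to) $c(\fL)$, because $\fL$ is dense. For $\rfL$, the same argument applies via Theorem~\ref{thm:reg_frac_lap}: $\restr(\fL)$ needs $\fL\circ A$ and a sparse matrix-vector product $[\fL\circ A]\onevec$, each $O(n)$, and $\Delta$ is sparse, so adding them to $\fL$ again costs $O(n)$ on top of $c(\fL)$.
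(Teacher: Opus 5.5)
Your proposal is correct and uses the same operation-count argument as the paper: the cost of $\fL$ dominates, and the Hadamard product with the sparse $I+A$ and the sparse additions cost $O(n)$ without ever forming $A^C$. Your accounting is slightly more careful than the paper's, which anchors the cost to the $O(n^3)$ spectral decomposition and counts the subtraction of the dense $\fL$ among the $O(n)$ sparse operations. You instead charge that subtraction $O(n^2)$ and absorb it via the lower bound $c(\fL)=\Omega(n^2)$, so your argument also covers the $O(n^2)$ Krylov variant mentioned in the remark that follows.
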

    \begin{proof} 
    Computing $\fL$ requires computing the spectral decomposition of $\Delta$, which is known to be an operation of order $n^3$ for any symmetric $n\times n$ matrix \cite{Golub_matrix_computation_4_ed}. Applying \eqref{eq:reg_adj_matrix} for building $\rfW$ requires the computation of $\fL$, one Hadamard product with $(I+A)$ and two matrix sums involving two sparse matrices. As the latter have complexity of order $n$, the overall cost for computing $\rfW$ by \eqref{eq:reg_adj_matrix} remains asymptotically the same of computing the fractional Laplacian alone.
    \end{proof}

Notice that, when $G$ has almost $n^2$ edges, $A$ is very dense. However, thanks to Remark~\ref{rmk:dense_graphs}, $A^C$ turns out to be sparse and a similar result given in Proposition~\ref{prop:comp_cost} holds. Indeed, it is sufficient to use (\ref{eq:A-sum formula complement}) instead of \eqref{eq:A-sum formula} in the proof of Proposition~\ref{prop:build_reg_adj_matrix} and exploit the sparsity of $A^C$ in the computation of $\rfW$.

\begin{proposition}\label{prop:comp_cost_laplacian} Under the same hypotheses of Proposition \ref{prop:comp_cost}, the computational cost of computing $\rfL$ using \eqref{eq:reg_laplacian_formula} is asymptotically the same of computing $\fL$ alone.  
\begin{proof} 
If $A$ is sparse, so are $\Delta$ and $\restr(\fL)$. By its definition given in \eqref{eq:restr__fractional_laplacian}, the latter requires the computation of $\fL$ plus few extra operations of order $n$. Therefore, the asymptotic cost of computing $  \Delta+\fL - \restr(\fL)$  is the same of computing $\fL$ alone.  
 \end{proof}
\end{proposition} 

\begin{remark}
Note that the claims of Propositions~\ref{prop:comp_cost}~and~\ref{prop:comp_cost_laplacian} do not change even if, instead of computing the full spectral decomposition, $\Delta^{\alpha}$ is approximated by a few dominant eigenpairs of $\Delta$ using Krylov methods with cost $O(n^2)$.    
\end{remark}

\section{Numerical examples} \label{sec:numerical_examples}

In this section, we test the theoretical results discussed in the previous sections on some synthetic and real-world networks.  

All the experiments are run in Matlab (R2023b) on a laptop equipped with 16 GB of RAM and an Intel i7-1260P CPU operating at 2.10 GHz.

\subsection{Dataset description}

The real-world networks we considered for our experiments can be divided into two subgroups according to their respective sources.

The first subgroup consists in a set of three graphs taken from the SuiteSparse Matrix Collection \cite{SuiteSparse}. In particular, we selected the following networks: \texttt{karate}, an unweighted graph that represents the friendships connection between the members of a university karate club; \texttt{YaleA}, a weighted graph from a machine learning dataset belonging to the ``ML\_Graph'' group; and \texttt{netscience}, a weighted graph that represents the collaborations of scientists working on network theory with edge-weights assigned as described by Newman in \cite{Newman2000ScientificCN}.

The second subgroup consists in two graphs of human connectomes extracted from the ``86 nodes'' folder within the \url{http://braingraph.org} dataset \cite{braingraph}. Graphs in this dataset are generated from MRI (Magnetic Resonance Imaging) scan taken from the Human Connectome Project \cite{Human_connectone} and are built as follows. First the brain is divided into $n$ regions, which serve as graph nodes\footnote{In this case, $n=86$.}. 
Each pair of nodes is then connected by an edge with a weight that is equal to the number of fibers connecting the related regions. The full dataset contains the data of $1064$ subjects and we chose the connectomes of subjects with ID 333330 
and 987074. In the following, we will denote these graphs connectomes by \texttt{ghc1},  
and \texttt{ghc2}, respectively.

Notice that while all real-world networks described in this section are undirected, some of them are not connected. For such instances, we focus solely on the largest connected component. 
Some properties of the networks in the dataset, including the number of nodes $n$, the number of edges $m$, the smallest degree $d_{\min}=\min_{i} (D)_{ii}$, the diameter diam, and the algebraic connectivity $\lambda_{n-1}(\Delta)$, are reported in Table \ref{tab:real_networks_info}. 

\begin{table}[t]
    \centering
    \caption{Some properties of the real-world networks in the dataset: number of nodes $n$, number of edges $m$, network diameter diam, smallest degree $d_{min}$, and  algebraic connectivity $\lambda_{n-1}(\Delta)$}.
    \begin{tabular}{lcccccc}       
		\toprule
		Network & $n$ & $m$ & weighted & $d_{\min}$ & diam & $\lambda_{n-1}(\Delta)$ \\
		\midrule
                  
        \texttt{karate}  & 34 & 74 & no &  1.00  & 5 & 0.47  \\
       \texttt{YaleA} &  165  & 1134 & yes & 0.21  & 5 &  0.03 \\
       \texttt{netscience}  & 379 & 914 & yes & 1.00 & $17$ &    0.01 \\
       \texttt{ghc1} &     75  &  903  &  yes &   7.75 &   4 &      7.81    \\
       \texttt{ghc2} & 72  &  790  & yes &   1.50  & 4  &   1.52  \\

        \bottomrule
    \end{tabular}   
    \label{tab:real_networks_info}
\end{table}

\subsection{Spectra of the (regularized) fractional Laplacian}

In Figure \ref{fig:examples_spectra}, we plot the spectra of $\Delta$, $\fL$, and $\rfL$ for the \texttt{netscience} and \texttt{ghc1} networks, evaluated at $\alpha = 0.9, 0.5, 0.1$.

For the \texttt{netscience} network (Figure \ref{subfig:netscience_fL} and \ref{subfig:netscience_rfL}), we have that all nonzero eigenvalues of $\fL$ tend to one as $\alpha$ approaches zero. In particular, eigenvalues smaller than one increase towards one, while those greater than one exhibit the opposite behavior. Meanwhile, the spectra of $\rfL$ correspond to a ``right-shift'' of the spectrum of $\Delta$ for all chosen values of $\alpha$. These observations are in line with what predicted by Lemma~\ref{lemma:eigenvalues_of_L_and_fL} and Proposition \ref{prop:shift_eigenvalue_L}.  

For the \texttt{ghc1} network (Figure~\ref{subfig:ghc1_fL} and \ref{subfig:ghc1_rfL}), we find that $\lambda_{n-1}(\Delta)>1$. As a consequence, we have that all nonzero eigenvalues of $\fL$ decrease in magnitude, tending toward one, while all eigenvalues of $\rfL$ consistently remain uniformly above the eigenvalues of $\Delta$. These results are consistent with what we expect from Proposition~\ref{prop:alg_conn_L_fL_and_rfL}. 

Notice that, unlike $\lambda_{n-1}(\Delta^{\alpha})$, $\lambda_{n-1}({}_r\Delta^{\alpha})$ is not generally monotonic with respect to $\alpha$. In fact, we can see from Figure \ref{subfig:netscience_rfL} and \ref{subfig:ghc1_rfL} that $\lambda_{n-1}({}_r\Delta^{0.1})>\lambda_{n-1}({}_r\Delta^{0.9})>\lambda_{n-1}(\Delta)$ for the \texttt{netscience} network, while $\lambda_{n-1}({}_r\Delta^{0.9})>\lambda_{n-1}({}_r\Delta^{0.1})>\lambda_{n-1}(\Delta)$ for the \texttt{ghc1} network. 

\begin{figure}[th]
    \centering
    \begin{subfigure}[b]{0.40\linewidth}
        \centering
        \includegraphics[width=\linewidth]{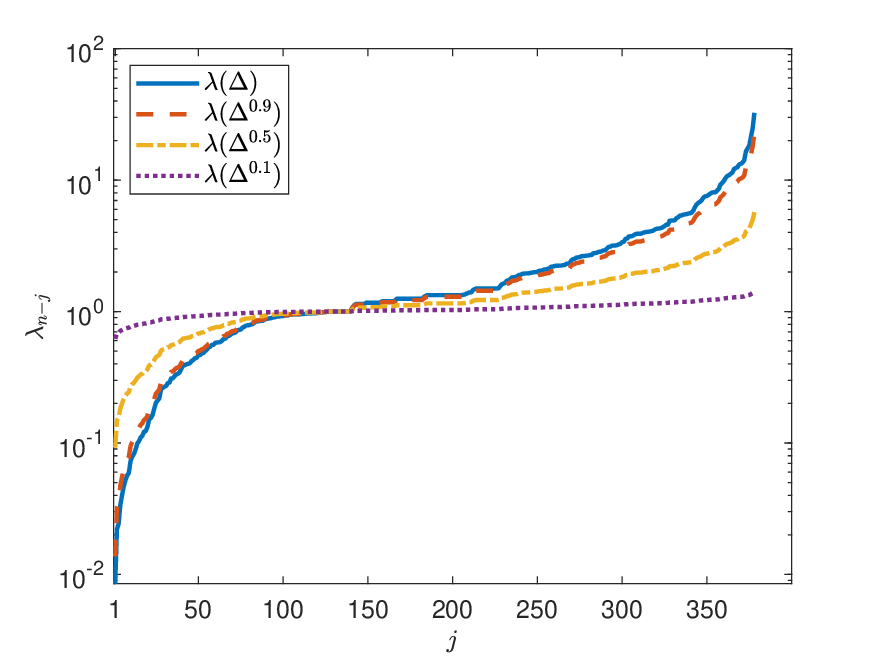}
        \caption{$\lambda(\fL)$, \texttt{netscience} network}
        \label{subfig:netscience_fL}
    \end{subfigure}
    \begin{subfigure}[b]{0.40\linewidth}
        \centering
        \includegraphics[width=\linewidth]{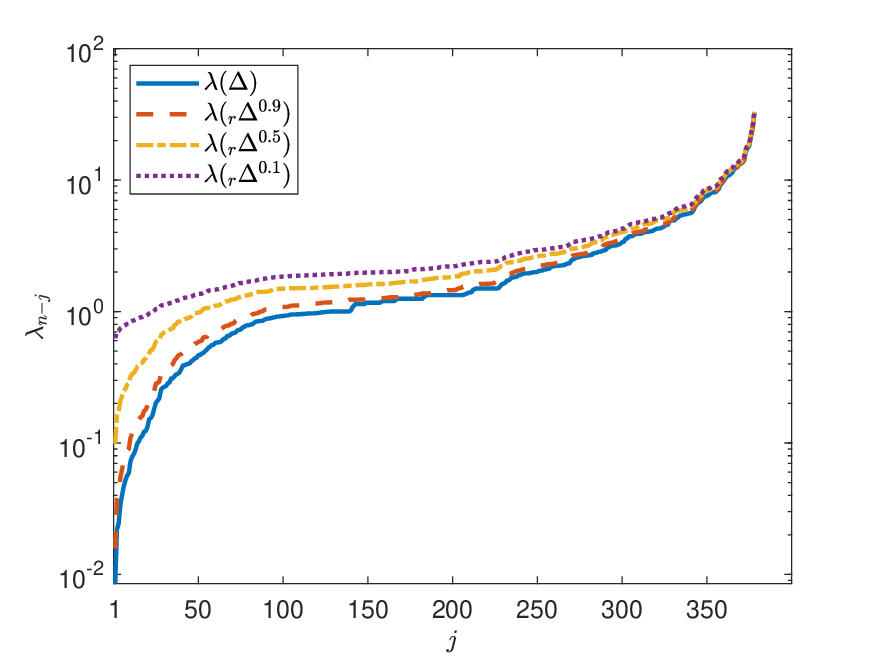}
        \caption{$\lambda(\rfL)$, \texttt{netscience} network}
        \label{subfig:netscience_rfL}
    \end{subfigure}
    \begin{subfigure}[b]{0.40\linewidth}
        \centering
        \includegraphics[width=\linewidth]{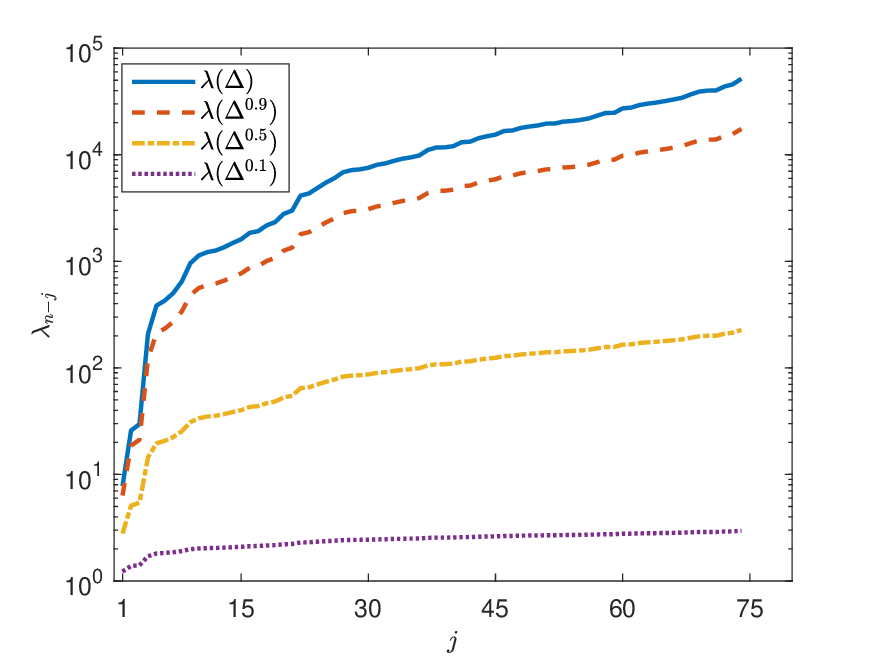}
        \caption{$\lambda(\fL)$, \texttt{ghc1} network}
        \label{subfig:ghc1_fL}
    \end{subfigure}
    \begin{subfigure}[b]{0.40\linewidth}
        \centering
        \includegraphics[width=\linewidth]{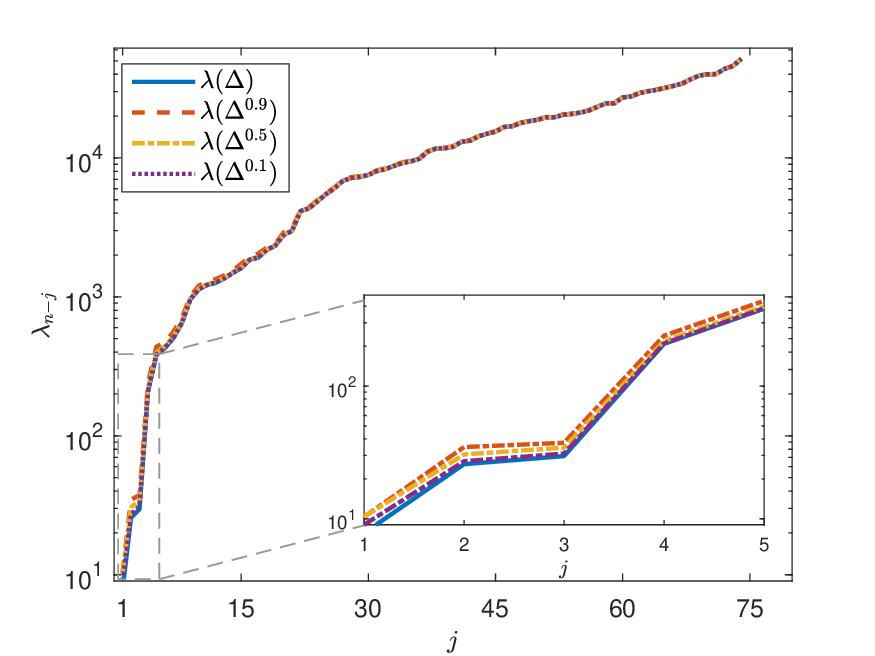}
        \caption{$\lambda(\rfL)$, \texttt{ghc1} network}
        \label{subfig:ghc1_rfL}
    \end{subfigure}
    \caption{ Spectra of $\Delta$, $\fL$ and $\rfL$ related to the \texttt{netscience} and  \texttt{ghc1} networks for $\alpha=0.9,0.5,0.1$ 
    }
    \label{fig:examples_spectra}
\end{figure}

Now, let us focus on Figure \ref{fig:spectra_that_switch_behavior}, which shows the first thirty nonzero eigenvalues of $\fL$ and $\rfL$ for the \texttt{YaleA} network, evaluated at $\alpha=0.60,0.40,0.20$. Note that for $\alpha=0.40,0.20$, all plotted eigenvalues of $\rfL$ are uniformly greater than the eigenvalues of $\fL$, while this does not hold for $\alpha=0.60$. 
These results are expected from Proposition~\ref{prop:shift_eigenvalue_fL}. The \texttt{YaleA} network, in fact, satisfies the hypothesis \eqref{eq:hypothesis_on_the_Laplacian} for all $0\leq \epsilon <  2.1\cdot 10^{-3}$ since we have that $(\Delta)_{ij}~\leq~-8.2\cdot~10^{-3}$ for all $(i,j)\in\cE$.  

\begin{figure}[th]
    \centering
    \includegraphics[width=\linewidth]{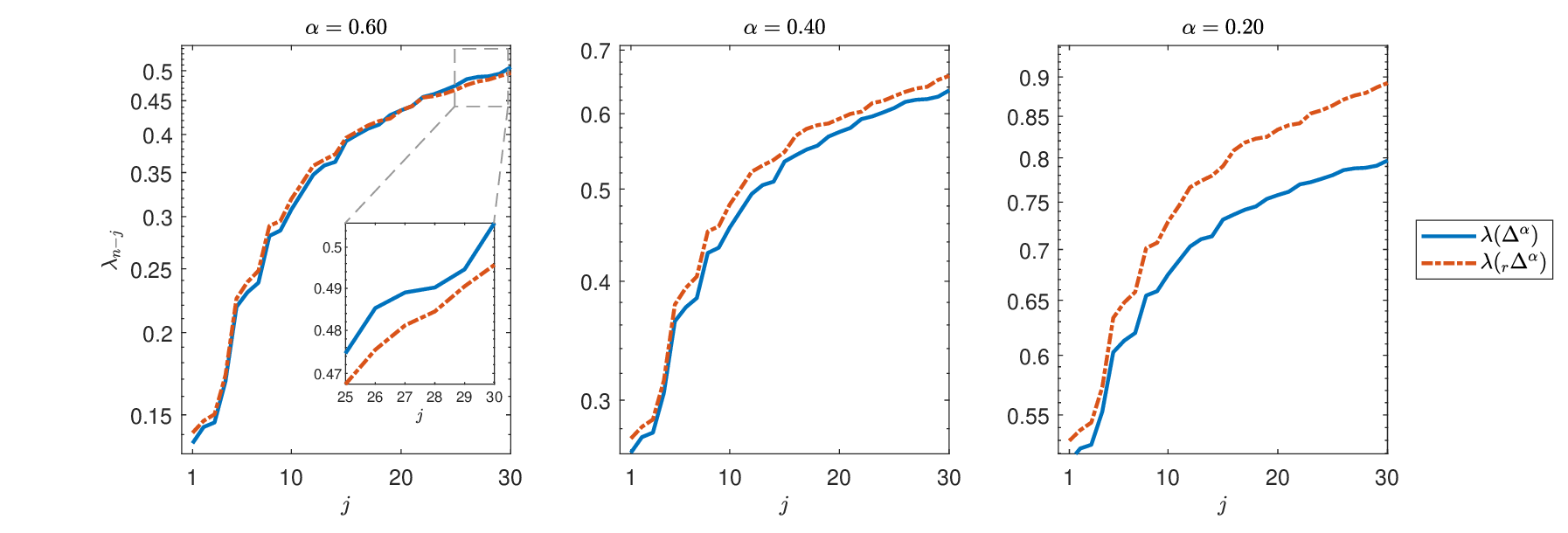}
    \caption { First $30$ nonzero eigenvalues of $\fL$ and $\rfL$ related to the \texttt{YaleA} network for $\alpha=0.60,0.40, 0.20$.}
    \label{fig:spectra_that_switch_behavior}
\end{figure}

\subsection{Non-local diffusion} \label{subsec:experiments_non_local_diffusion}

In the following, we analyze the rate at which the solution of the heat equation
\begin{equation}\label{eq:heat_equation_all_operators}
\begin{cases}
    \diff{\mathbf{x}(t)}{t} +L\mathbf{x}=0, & t>0, \\
    \mathbf{x}(0) = \mathbf{x}_0, 
\end{cases} 
\end{equation}
for $L \in\{\Delta, \fL, \rfL$\}, and $\alpha \in \{0.9, 0.5, 0.1\}$ 
converges to the equilibrium $\overline{\xvec}_{0}$ given the (randomly generated) initial vector $\xvec_0\in\{0,1\}^{n}$. 
To do that, we examine the time needed to reach the threshold value of $10^{-10}$ for
\begin{equation*}
    \delta(t) = \frac {\| \exp(-tL)\mathbf{x}_0 - \overline{\xvec}_0 \| }{\|\xvec_0\|},
\end{equation*}
that is, the 2-norm of the \emph{disagreement vector} relative to the norm of $\xvec_0$. The results for the \texttt{YaleA} and \texttt{ghc1} networks are shown in Figure \ref{fig:consensus}. 

In the case of the \texttt{YaleA} network (Figure \ref{subfig:consensus_YaleA}), we see that the solution of the heat equations associated with the fractional Laplacian and its regularization converges faster if compared to the one associated with the standard Laplacian.  
This does not happen for the \texttt{ghc1} network (Figure \ref{subfig:consensus_ghc1}), where the fact that $\lambda_{n-1}(\Delta)>1$ causes the solution of the equation associated with the fractional Laplacian to converge slower than solution of the standard heat equation.
Note that, unlike $\fL$, the regularized fractional Laplacian maintains its superdiffusive behavior independently of the value of the algebraic connectivity of the graph. The results given in Figure~\ref{fig:consensus} are also summarized in Table~\ref{tab:time_to_hit_threshold}.

\begin{figure}[th]
    \centering
        \begin{subfigure}{\linewidth}
        \centering
        \includegraphics[width=\linewidth]{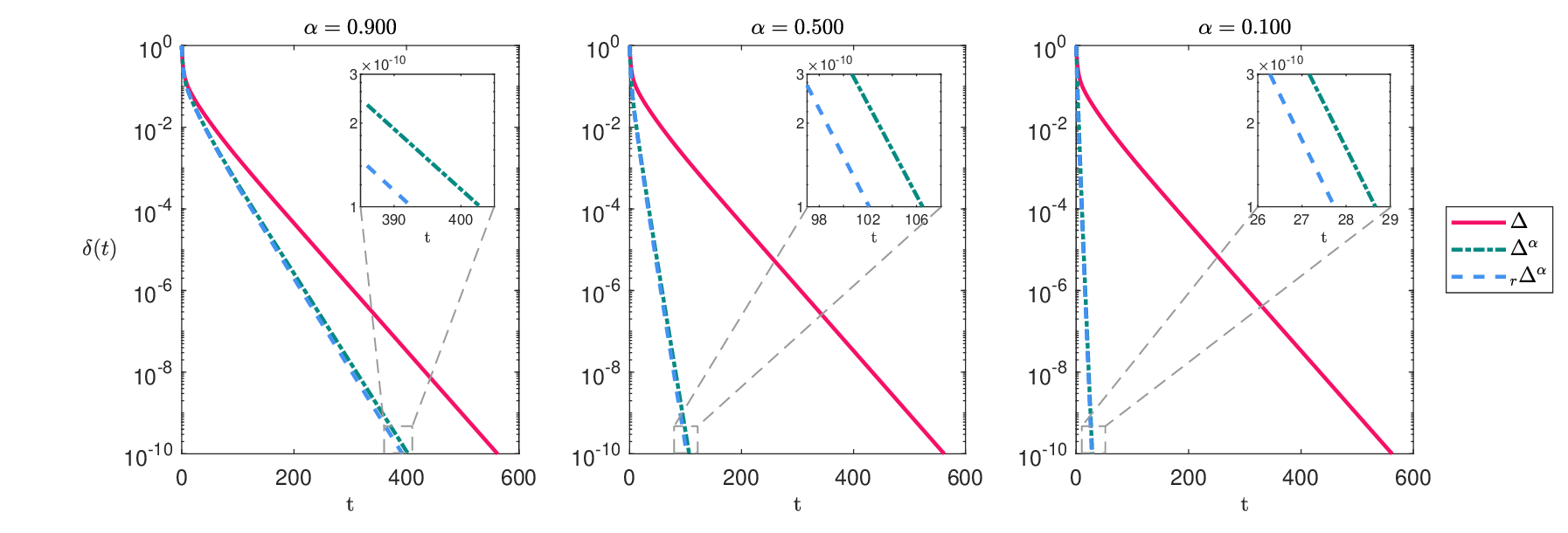}
        \caption{\texttt{YaleA} network}
        \label{subfig:consensus_YaleA}
        \end{subfigure}
        \begin{subfigure}{\linewidth}
        \centering
        \includegraphics[width=\linewidth]{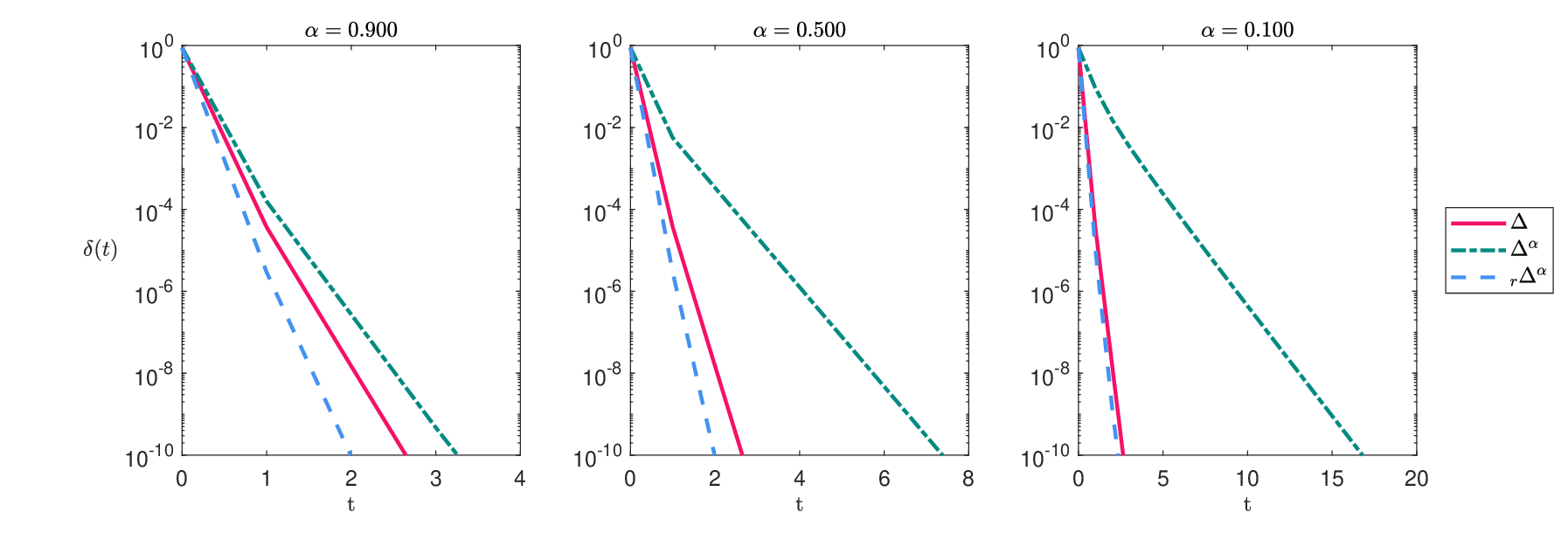}
        \caption{\texttt{ghc1} network}
        \label{subfig:consensus_ghc1}
        \end{subfigure}
    \caption{Time steps needed for $\delta(t)$, i.e., the relative 2-norm of the disagreement vector, to reach the selected threshold of $10^{-10}$ for the \texttt{YaleA} and the \texttt{ghc1} networks. The three lines in each plot represent the speed at which the solution of equation \eqref{eq:heat_equation_all_operators} with $L=\Delta,\fL,\rfL$ converges to the consensus equilibrium. Here we let $\alpha$ vary in $\{0.9,0.5,0.1\}$} 
    \label{fig:consensus}
\end{figure}

\begin{table}[t]
\caption{Time steps required for $\delta(t)$ to hit the threshold of $10^{-10}$ for the \texttt{YaleA} and the \texttt{ghc1} networks. Each column is related to the solution of equation \eqref{eq:heat_equation_all_operators} with $L=\Delta,\fL,\rfL$}
    \centering
    \begin{tabular}{lccccccc}
    \toprule
    \multirow{2}*{Network} & \multirow{2}*{$\Delta$} & \multicolumn{3}{c}{$\fL$} & \multicolumn{3}{c}{$\rfL$} \\
     &    & $\alpha=0.9$  & $\alpha=0.5$  & $\alpha=0.1$  & $\alpha=0.9$ &  $\alpha=0.5$  & $\alpha=0.1$  \\
	\midrule
     \texttt{YaleA} & 562   & 403 & 107 & 29 & 393 & 103 & 28  \\
     \texttt{ghc1} & 3 & 4 & 8& 17 & 2 & 2 & 3  \\
     \bottomrule
    \end{tabular}
    \label{tab:time_to_hit_threshold}
\end{table}

Now let us focus on Figure \ref{fig:consensus_with_PL}, where we extend the above experiment by including a comparison with the path Laplacian \cite{estrada2012path_laplacian} and its generalization for weighted graphs given in \cite{Bianchi2022}. In particular, we study the behavior of $\delta(t)$ for $L=\Delta,\fL,\rfL,\Delta_{s}$, where $\alpha$ is fixed at $0.5$, and $\Delta_{s}$ indicates the path Laplacian with various choices of the parameter $s>0$. 

For the unweighted \texttt{karate} network (Figure \ref{subfig:karate}), we have that the path Laplacian is always superdiffusive compared to the standard graph Laplacian. These results are in line with the ones in \cite{Estrada_2021_fractional_vs_path_laplacians}, where it is stated that, for an unweighted graph, $\lambda_{n-1}(\Delta_{s})<\lambda_{n-1}(\Delta)$ for all $s>0$.
In the case of the weighted \texttt{ghc2} network (Figure \ref{subfig:consensus_ghc2}), we see that the solution of the heat equation associated with the path Laplacian loses its superdiffusive behavior for certain choices of the parameter since, for example, $\lambda_{n-1}(\Delta_{1.25})=0.56<\lambda_{n-1}(\Delta)=1.52$. The results given in Figure~\ref{fig:consensus_with_PL} are also summarized in Table~\ref{tab:time_to_hit_threshold_with_PL}.

\begin{table}[t]
\caption{Time steps required for $\delta(t)$ to hit the threshold of $10^{-10}$ for the \texttt{karate} network and the \texttt{ghc2} network. Each column is related to the solution of equation \eqref{eq:heat_equation_all_operators} with $L=\Delta,\fL,\rfL,\Delta_s$} 
    \centering
    \begin{tabular}{lccccccccc}
    \toprule
    \multirow{2}*{Network} & \multirow{2}*{$\Delta$} & $\fL$ & $\rfL$  & \multicolumn{6}{c}{$\Delta_{s}$} \\
         &   & $\alpha=0.5$ & $\alpha=0.5$ & $s=6$  & $s=4$  & $s=2$  & $s=1.25$ &  $s=0.85$  & $s=0.5$  \\
    \midrule
     \texttt{karate}  & 46 & 32 & 21 & 37 & 21 & 6 & -- & -- & --\\
     \texttt{ghc2} & 14 & 17 & 8 & -- & -- & -- & 37 &  8  & 2  \\
     \bottomrule
    \end{tabular}
    \label{tab:time_to_hit_threshold_with_PL}
\end{table}

Overall, the speed at which $\delta(t)$ reaches the selected threshold in case of the path Laplacian depends on $s$ and can be above, on par, or below the one of the fractional alternatives. On the contrary, the regularized fractional operator is always superdiffusive for any choice of the fractional parameter, and independently of whether the tested graph is weighted or unweighted.

\begin{figure}[th]
        \begin{subfigure}{\linewidth}
        \centering
        \includegraphics[width=\linewidth]{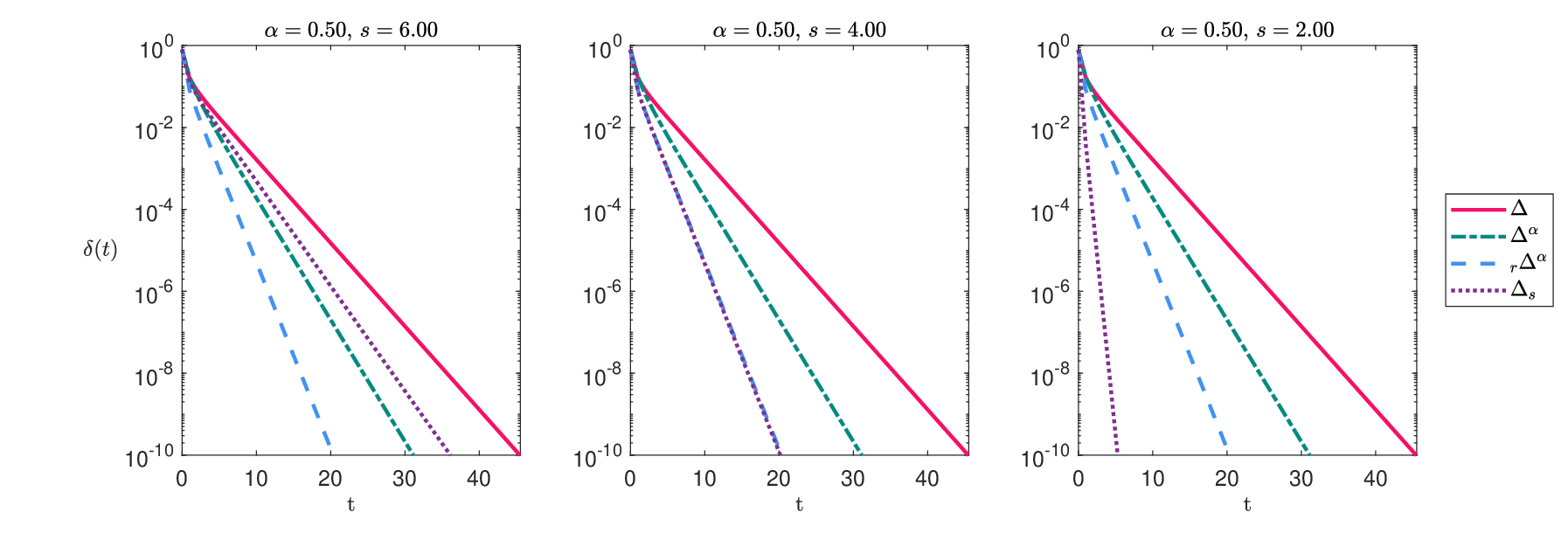}
        \caption{\texttt{karate} network}
        \label{subfig:karate}
        \end{subfigure}
        \begin{subfigure}{\linewidth}
        \centering
        \includegraphics[width=\linewidth]{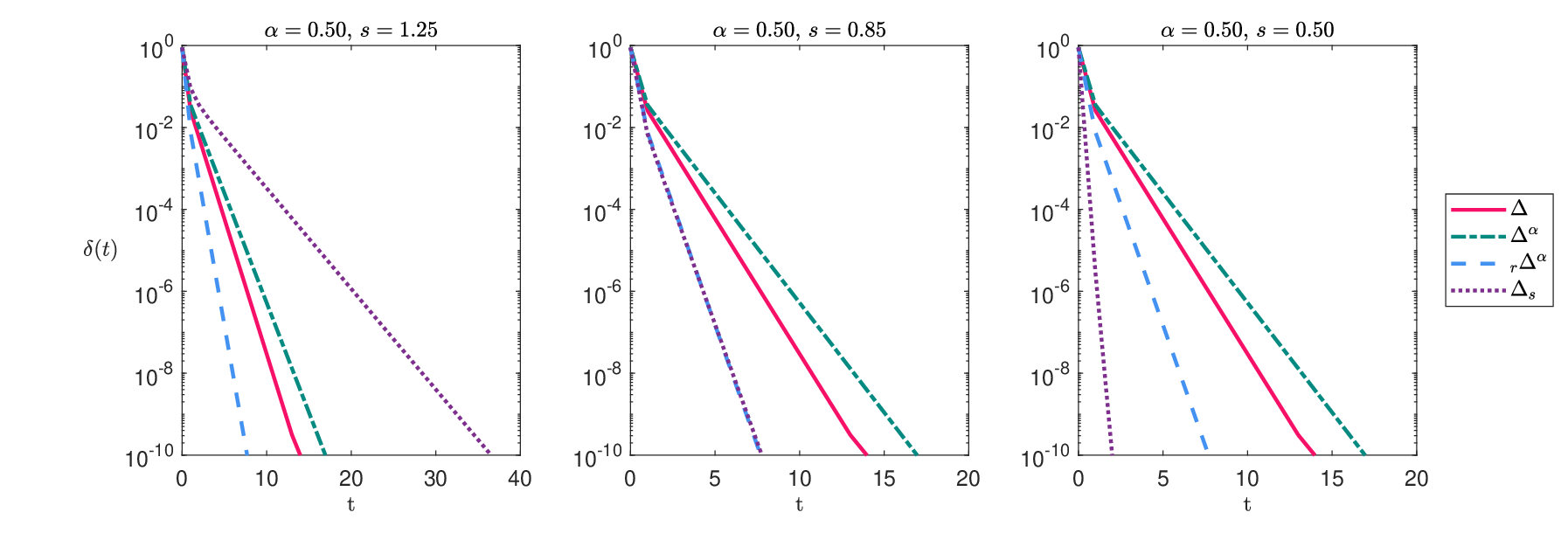}
        \caption{\texttt{ghc2} network}
        \label{subfig:consensus_ghc2}
        \end{subfigure}
    \caption{Time steps needed for $\delta(t)$, i.e., the relative 2-norm of the disagreement vector, to reach the selected threshold of $10^{-10}$ for the unweighted \texttt{karate} network and the weighted \texttt{ghc2} network. The three lines in each plot represent the speed at which the solution of equation \eqref{eq:heat_equation_all_operators} with $L=\Delta,\fL,\rfL,\Delta_{s}$ converges to the consensus equilibrium. Here we fix $\alpha=0.5$ and we let $s$ vary in $\{6,4,2,1.25,0.85,0.5\}$ } 
    \label{fig:consensus_with_PL}
\end{figure}

\subsection{Timings for synthetic and real-world networks}

In this series of experiments, we test the two computational methods discussed in Section \ref{subsec:building_regularized} for computing 
$\rfL$. In particular, we fix $\alpha=0.5$ and compare the execution times of the following algorithms:

\begin{enumerate}
    \item the naive substitution that computes $\rfW$ using two loops and then Equation~\eqref{eq:reg_laplacian_defin};
    \item the newly introduced approach that uses  Equation~\eqref{eq:reg_laplacian_formula}.
\end{enumerate}

In Figure \ref{fig:histogram_computational_times}, we report the absolute CPU timings for some real-world networks in the dataset. In Figure \ref{fig:timings} we show the computational times for synthetic \emph{scale-free} networks with increasing sizes that were generated using the function \texttt{pref(n,4)} taken from the {CONTEST} Toolbox~\cite{CONTEST}. 

From both Figures  \ref{fig:histogram_computational_times} and \ref{subfig:timings_fL_excluded}, we see that the new method based on \eqref{eq:reg_laplacian_formula} constantly outperforms the naive substitution algorithm, especially in the case of the largest tested networks, when we achieve a speedup of about $100\times$. These results are largely expected, since our algorithm exploits the sparsity of the graph (all the tested graphs are, in fact, sparse) while the naive substitution algorithm does not. In addition, from Figure~\ref{fig:histogram_computational_times} we see that in case of the real-world networks in Table~\ref{tab:real_networks_info} the measured CPU times for computing $\rfL$ are about the same of computing $\fL$. This confirms what predicted by Proposition \ref{prop:comp_cost_laplacian}. Similar results are obtained also for the synthetic networks in Figure~\ref{subfig:timings_fL_included}, where the two reported timing plots cannot be distinguished. 

\begin{figure}[th]
    \centering
    \includegraphics[width=0.85\linewidth]{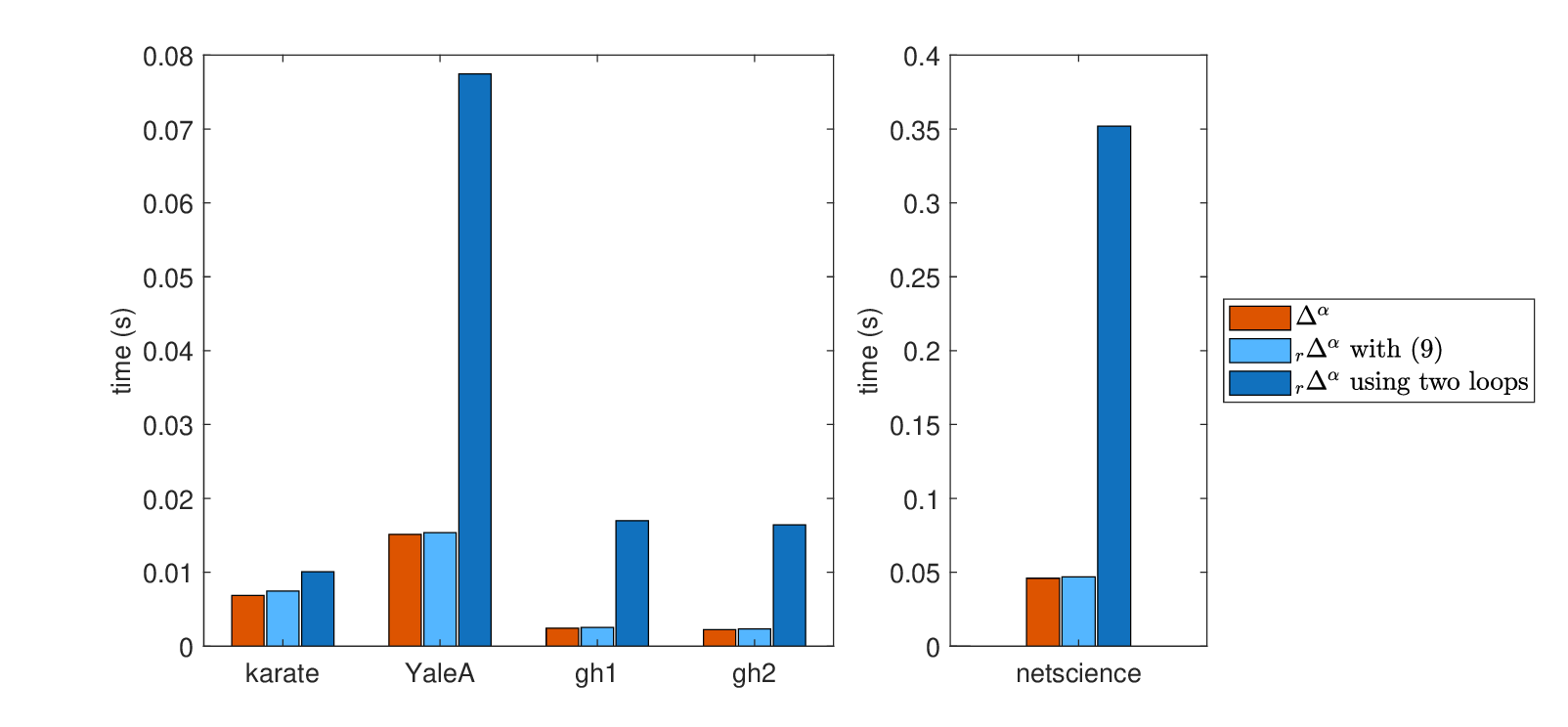}
    \caption{CPU times (in seconds) for computing $\rfL$ with (\ref{eq:reg_laplacian_formula}) (second bar) and the naive substitution algorithm which requires two loops (third bar), for the real-world networks described in Table \ref{tab:real_networks_info}. The timings include the computation of $\fL$ (first bar). 
    All values are averaged over 20 runs with the same parameters. The CPU times for the \texttt{netscience} network are displayed separately because the time scale is much larger than in the other cases} 
    \label{fig:histogram_computational_times}
\end{figure}

\begin{figure}[ht]
\begin{subfigure}{0.47\linewidth}
    \centering
    \includegraphics[width=\linewidth]{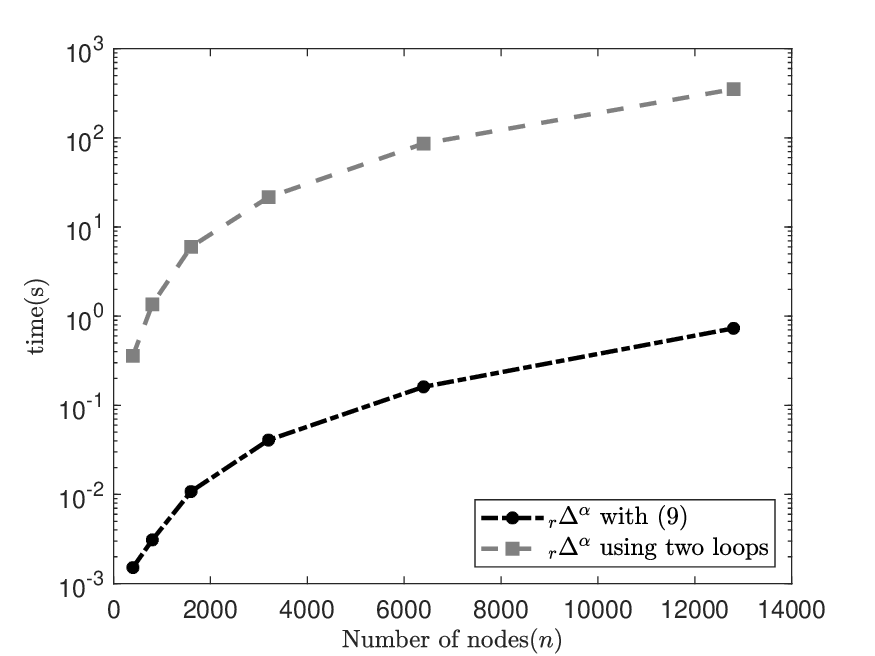} 
    \caption{Timings for synthetic networks (spectral decomposition excluded)}
    \label{subfig:timings_fL_excluded}
\end{subfigure}
\hfill
\begin{subfigure}{0.47\linewidth}
    \centering
    \includegraphics[width=\linewidth]{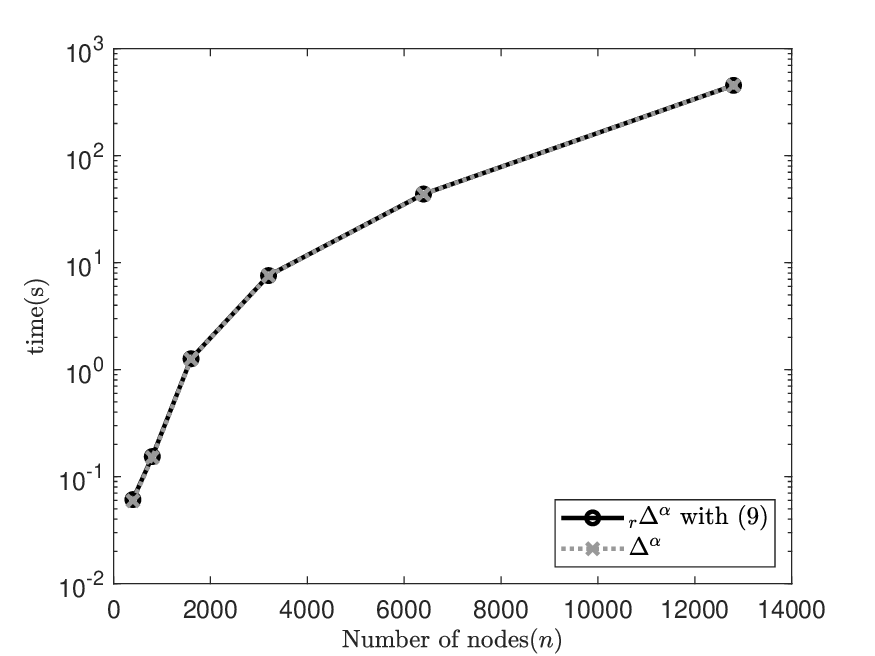}  
    \caption{Timings for synthetic networks (spectral decomposition included) }
    \label{subfig:timings_fL_included}
\end{subfigure}
    \caption{ (A) CPU times in seconds, excluding the spectral decomposition cost, for computing $\rfL$ with \eqref{eq:reg_laplacian_formula} and the naive substitution algorithm that requires two loops. (B) CPU times in seconds, including the spectral decomposition cost, for computing $\fL$ and $\rfL$  with \eqref{eq:reg_laplacian_formula}. In both (A)-(B) we consider synthetic networks of increasing sizes and times are averaged over $20$ runs with the same parameters}
\label{fig:timings}
\end{figure}

\section{Conclusions and open questions} 
\label{sec:conclusions}

In this paper, we analyzed the spectral and computational properties of the regularized fractional Laplacian $\rfL$ as a non-local operator for undirected graphs. In particular, we prove that it always yields improved diffusion performance independently of whether the underlying graph is weighted or unweighted—a property not generally satisfied by the fractional Laplacian or other non-local Laplacian variants. We also described an Boolean--Hadamard computational method to compute $\rfL$ which has the same theoretical cost of computing the fractional Laplacian and is cheaper than that of the naive substitution algorithm which can require up to $n^2$ comparisons.

Numerical tests on both synthetic and real-world networks confirmed the theoretical results and showed the regularized fractional Laplacian is the only operator ensuring superdiffusivity in all tested configurations when compared to the fractional and the path Laplacians. 

Open questions are whether this work can be extended to directed graphs \cite{Benzi_2020_fractional_laplacian}, and whether some of these results generalize to the \emph{normalized graph Laplacian} and its respective non-local variants \cite{Riascos_introduction_of_fractional_laplacian}.

\paragraph*{Acknowledgments}
 The authors thank the anonymous referees for their suggestions which improved the quality of the manuscript. The authors also gratefully acknowledge Emilia Cozzolino for helpful discussions and for pointing them towards the datasets discussed in \cite{braingraph}. Both authors are members of INdAM-GNCS.

\paragraph{Funding}
This work has been partially financed by the INdAM-GNCS Project CUP E53C25002010001. MM acknowledges the MUR Excellence Department Project MatMod@TOV awarded to the Department of Mathematics, University of Rome Tor Vergata, CUP E83C23000330006.

\bibliographystyle{abbrv}
\bibliography{ref}

\end{document}